\newtheorem{proposition}{Proposition}
\newtheorem{lemma}{Lemma}
\newtheorem{corollary}{Corollary}
\newtheorem{theorem}{Theorem}
\theoremstyle{remark}
\newcommand{\CC}{\mathbb C}
\newcommand{\QQ}{\mathbb Q}
\newcommand{\Fp}{{\mathbb F}_p}
\newcommand{\Fq}{{\mathbb F}_q}
\newcommand{\Ql}{\bar{\mathbb Q}_\ell}
\newcommand{\Dbc}{D^b_c}
\newcommand{\FF}{\mathcal F}
\newcommand{\GGG}{\mathcal G}
\newcommand{\HH}{\mathcal H}
\newcommand{\LL}{\mathcal L}
\newcommand{\R}{\mathrm R}
\newcommand{\GG}{\mathbb G}
\newcommand{\Gm}{{\mathbb G}_m}
\newcommand{\Gmk}{{\mathbb G}_{m,k}}
\newcommand{\Gmkk}{{\mathbb G}_{m,\bar k}}
\newcommand{\ZZ}{\mathbb Z}
\newcommand{\PP}{\mathbb P}
\newcommand{\HHH}{\mathrm H}
\newcommand{\KK}{\mathcal K}
\newcommand{\GL}{\mathrm{GL}}
\newcommand{\aaa}{\mathbf a}
\newcommand{\bbb}{\mathbf b}
\newcommand{\ccc}{\mathbf c}
\newcommand{\eee}{\mathbf e}
\newcommand{\ttt}{\mathbf t}
\newcommand{\uuu}{\mathbf u}
\newcommand{\vvv}{\mathbf v}
\newcommand{\Char}{\mathbf{Char}}
\title{Equidistribution and independence of Gauss sums}
\author{Antonio Rojas-Le\'on}
\address{Departamento de Álgebra \\
         Facultad de Matemáticas \\
         c/Tarfia s/n, 41012 Sevilla, SPAIN}
\email{arojas@us.es}
\begin{document}

\begin{abstract}
We prove a general independent equidistribution result for Gauss sums associated to $n$ monomials in $r$ variable multiplicative characters over a finite field, which generalizes several previous equidistribution results for Gauss and Jacobi sums. As an application, we show that any relation satisfied by these Gauss sums must be a combination of the conjugation relation $G(\chi)G(\overline\chi)=\pm q$, Galois conjugation invariance and the Hasse-Davenport product formula.
\end{abstract}

\maketitle

\renewcommand{\thefootnote}{}
\footnote{ORCID code: 0000-0003-1683-9487}
\footnote{Mathematics Subject Classification: 11L05, 11L07, 11T24}
\footnote{Partially supported by PID2020-114613GB-I00 (Ministerio de Ciencia e Innovaci\'on) and P20-01056 (Consejer\'{\i}a de Econom\'{\i}a, Conocimiento, Empresas y Universidad de la Junta de Andaluc\'{\i}a and FEDER)}

\section{Introduction}

Let $k=\Fq$ be a finite field of characteristic $p$. For every multiplicative character $\chi:k^\times\to\CC^\times$ the associated Gauss sum is defined as
$$
G(\chi):=-\sum_{t\in k^\times}\chi(t)\psi(t)
$$
where $\psi:k\to\CC^\times$ is the additive character given by
$$
\psi(t)=\exp(2\pi i\mathrm{Tr}_{\Fq/\Fp}(t)/p).
$$
It is well known that $G(\mathbf 1)=1$ (where $\mathbf 1$ stands for the trivial character) and, for non-trivial $\chi$, the absolute value of $G(\chi)$ is $\sqrt{q}$. Moreover, we have the following elementary properties, for every non-trivial $\chi$ (see eg. \cite[Theorem 1.1.4]{berndt1998gauss})
\begin{equation}\label{elem1}
 G(\chi)G(\bar\chi)=\chi(-1)q
\end{equation}
\begin{equation}\label{elem2}
 G(\chi^p)=G(\chi)
\end{equation}
and, for $d|q-1$ and $\epsilon$ a multiplicative character of order $d$, the Hasse-Davenport product formula (eg. \cite[Theorem 11.3.5]{berndt1998gauss})
\begin{equation}\label{h-d}
G(\chi^d)=\chi(d^d)\prod_{i=0}^{d-1}\frac{G(\chi\epsilon^i)}{G(\epsilon^i)}.
\end{equation}

Any other non-trivial additive character $\psi':k\to\CC^\times$ is given by $t\mapsto \psi(\alpha t)$ for some $\alpha\in k^\times$, and we can define the associated Gauss sums
$$
G(\alpha,\chi):=-\sum_{t\in k^\times}\chi(t)\psi(\alpha t).
$$
We have the identity $G(\alpha,\chi)=\overline\chi(\alpha)G(\chi)$ (eg. \cite[Theorem 1.1.3]{berndt1998gauss}).

Fix an algebraic closure $\bar k$ of $k$, and for any $m\geq 1$ let $k_m$ be the unique degree $m$ extension of $k$ in $\bar k$. For any character $\chi:k_m^\times\to\CC^\times$ we denote by $G_m(\chi)$ the corresponding Gauss sum over $k_m$. Every character $\chi:k^\times\to\CC^\times$ can be pulled back to a character of $k_m$ by composing with the norm map; we will also denote this character by $\chi$ if there is no ambiguity. The Hasse-Davenport relation (eg. \cite[Theorem 11.5.2]{berndt1998gauss}) states that
$$
G_m(\chi)=G(\chi)^m
$$
(the lack of sign in this formula is due to the negative sign used in our definition of Gauss sums).

The distribution of Gauss sums has been widely studied. It is a consequence of Deligne's bound on Kloosterman sums that, as $q$ increases, the $q-2$ normalized Gauss sums $q^{-1/2}G(\chi)$ for the set of non-trivial characters $\chi$ of $k$ become equidistributed on the unit circle for the probability Haar measure \cite[Th\'eor\`eme 1.3.3.1]{katz1980se}. In \cite[Theorem 9.5]{katz1988gauss}, Katz proves independent equidistribution on $S^1$ as $q$ increases of the Gauss sums $q^{-1/2}G(\chi_1\chi),\ldots,q^{-1/2}G(\chi_n\chi)$, where $\chi_1,\ldots,\chi_n$ are fixed characters and $\chi$ runs through the set of characters different from $\overline\chi_1,\ldots,\overline\chi_n$.

Our main result in this article is a general equidistribution result for Gauss sums associated to monomials in multiplicative characters. Fix a positive integer $r$, let $n\geq 1$, fix $n$ non-zero $r$-tuples $\aaa_1,\ldots,\aaa_n\in\ZZ^r$, $n$ multiplicative characters $\eta_1,\ldots,\eta_n:k^\times\to\CC^\times$ and $n$ elements $\ttt_1,\ldots,\ttt_n\in(k^\times)^r$. For every $m\geq 1$, let $T_m$ be the set of $r$-tuples of multiplicative characters $\chi=(\chi_1,\ldots,\chi_r)$ of $k_m$ (equivalently, the set of characters of $(k_m^\times)^r$) and $S_m\subseteq T_m$ the subset consisting of the $\chi$ such that $\eta_i\chi^{\aaa_i}:=\eta_i\chi_1^{a_{i1}}\cdots\chi_r^{a_{ir}}\neq\mathbf 1$ for every $i=1,\ldots,n$. For every $\chi\in S_m$ we get an element $\Phi_m(\chi)\in (S^1)^n$ given by
$$
\Phi_m(\chi)=(q^{-m/2}\chi(\ttt_1)G_m(\eta_1\chi^{\aaa_1}),\ldots,q^{-m/2}\chi(\ttt_n)G_m(\eta_n\chi^{\aaa_n})).
$$

We say that a non-zero $r$-tuple $\bbb\in\ZZ^r$ is \emph{primitive} if its coordinates are relatively prime and its first non-zero coordinate is positive. Any non-zero $r$-tuple $\aaa\in\ZZ^r$ can be writen uniquely as $\mu\bbb$, where $\mu\in\ZZ\backslash\{0\}$ and $\bbb$ is primitive. We write all $\aaa_i=\mu_i\bbb_i$ in that way.

Let $\Char_k$ be the injective limit of the character groups $\widehat {k_m^\times}$ for $m\geq 1$ via the maps $\widehat {k_m^\times}\to\widehat {k_{md}^\times}$ given by composition with the norm maps $k_{md}^\times\to k_m^\times$ (which can be identified with the set of finite order characters of the tame fundamental group of $\Gmkk$). We denote by $V_k$ the $\QQ$-vector space with basis the set $\Char_k$. For every $i=1,\ldots,n$, let $\vvv_i\in V_k$ be the element $\sum_{\xi^{\mu_i}=\eta_i}\xi$, that is, the sum of all characters $\xi\in\Char_k$ whose $\mu_i$-th power is $\eta_i$. There are exactly $\nu_i$ such characters, where $\nu_i$ is the prime to $p$ part of $\mu_i$.

\begin{theorem}\label{main}
 Suppose that, for every $\bbb\in\ZZ^r$, the set of $\vvv_i$ for the $i=1,\ldots,n$ such that $\bbb_i=\bbb$ is linearly independent in $V_k$. Then the sets $\{\Phi_m(\chi)|\chi\in S_m\}$ become equidistributed in $(S^1)^n$ with respect to the Haar measure as $m\to\infty$.
\end{theorem}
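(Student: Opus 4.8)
The plan is to combine the Weyl equidistribution criterion with Deligne's equidistribution philosophy, turning the statement into square-root cancellation for a family of twisted character sums and then isolating the exact arithmetic obstruction that the hypothesis removes. Since the target $(S^1)^n$ is compact abelian, equidistribution of $\{\Phi_m(\chi)\mid\chi\in S_m\}$ for the Haar measure is equivalent, by Weyl's criterion, to showing that for every nonzero $\mathbf{k}=(k_1,\dots,k_n)\in\ZZ^n$ the normalized sum
$$
\frac{1}{|S_m|}\sum_{\chi\in S_m}\prod_{j=1}^n\bigl(q^{-m/2}\chi(\ttt_j)G_m(\eta_j\chi^{\aaa_j})\bigr)^{k_j}
$$
tends to $0$ as $m\to\infty$. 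A negative exponent $k_j$ is first absorbed using the conjugation relation \eqref{elem1}: it replaces $q^{-m/2}\overline{G_m(\eta_j\chi^{\aaa_j})}$ by a genuine Gauss sum of the inverse character $\overline{\eta_j}\chi^{-\aaa_j}$ (with $\ttt_j$ inverted and an explicit sign), so after relabelling each factor becomes a product of $|k_j|$ honest Gauss sums.

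I would then geometrize this Weyl sum. Expanding every Gauss sum by its definition, with one auxiliary variable $s_{j\beta}\in k_m^\times$ for each of the $|k_j|$ copies, the summand becomes a product of additive characters $\psi_m(s_{j\beta})$, of the fixed multiplicative characters $\eta_j^{\pm}(s_{j\beta})$, and of monomials in the coordinates of $\chi$. Summing first over all $\chi\in T_m$ and invoking orthogonality of the characters of $k_m^\times$ collapses the $\chi$-sum to $(q^m-1)^r$ times the indicator of an explicit system of $r$ monomial equations in the $s_{j\beta}$ and the fixed $\ttt_j$. These equations define a translate $Z$ of a subtorus of $\Gm^N$, where $N=\sum_j|k_j|$, whose codimension is the rank of the integer matrix assembled from the $\aaa_j$. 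What remains is a point count on $Z$ weighted by the Artin--Schreier sheaf $\LL_\psi(\sum_{j,\beta}s_{j\beta})$ and the Kummer sheaves $\LL_{\eta_j^{\pm}}$; by the Grothendieck--Lefschetz formula it equals $\sum_i(-1)^i\Tr(\mathrm{Frob}_q^m\mid H^i_c(Z_{\bar k},\FF))$ for the resulting rank-one sheaf $\FF$. Restricting the outer sum from $T_m$ back to $S_m$ discards $O(q^{m(r-1)})$ characters and, after normalization, contributes only a negligible error.

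By Deligne's theorem (Weil~II), together with the expected concentration of these Artin--Schreier--Kummer sums in middle degree, the twisted point count exhibits full square-root cancellation and the normalized Weyl sum decays like a negative power of $q^{m}$ --- \emph{provided} $\FF$ is geometrically nontrivial on every connected component of $Z$; here the hypothesis $\aaa_j\neq 0$ guarantees the relevant matrix rank is positive, so the saving is genuine. The only way the Weyl sum can fail to vanish in the limit is that $\FF$ restricts to the trivial sheaf on some positive-dimensional subtorus, producing a surviving top-degree term. Such a geometric triviality is precisely a multiplicative relation among the normalized Gauss sums built out of the conjugation relation \eqref{elem1}, the Frobenius invariance \eqref{elem2} and the Hasse--Davenport product formula \eqref{h-d}; ruling it out for every $\mathbf{k}\neq 0$ is the content of the theorem.

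The heart of the argument, and the step I expect to be hardest, is translating the hypothesis on the $\vvv_i$ into this geometric nontriviality. I would exploit the primitive decomposition $\aaa_i=\mu_i\bbb_i$: Gauss sums whose monomials point in distinct primitive directions $\bbb$ depend on multiplicatively independent functions of $\chi$, contribute independent tensor factors, and cannot enter a common relation, so the analysis localizes to one direction $\bbb$ at a time. Within a single direction the Hasse--Davenport formula \eqref{h-d} rewrites $G_m(\eta_i\chi^{\mu_i\bbb})$ through the $\nu_i$ Gauss sums indexed by the characters $\xi$ with $\xi^{\mu_i}=\eta_i$ --- exactly the constituents recorded by $\vvv_i=\sum_{\xi^{\mu_i}=\eta_i}\xi\in V_k$, with the prime-to-$p$ count $\nu_i$ reflecting the identification \eqref{elem2}. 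A geometric triviality of $\FF$ in that direction then forces a linear relation $\sum_i k_i\vvv_i=0$ among the corresponding $\vvv_i$, which the linear-independence hypothesis forbids unless all those $k_i$ vanish; running over all directions yields $\mathbf{k}=0$, a contradiction. Making this dictionary precise --- computing the geometric monodromy of $\FF$, matching the Artin--Schreier $g\mapsto g^p-g$ ambiguity with \eqref{elem2}, confirming that no relation mixes distinct primitive directions, and verifying the middle-degree concentration underlying the cancellation --- is the technical core; once it is established, Weil~II completes the proof.
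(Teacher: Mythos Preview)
Your overall architecture matches the paper's: reduce to Weyl sums indexed by $\ccc\in\ZZ^n$, absorb negative exponents via \eqref{elem1}, expand each Gauss sum, sum over $\chi$ first using orthogonality to cut out a subvariety, and interpret the remaining sum cohomologically. The intuition that the analysis localizes to one primitive direction $\bbb$ at a time, and that within a direction the Hasse--Davenport decomposition of $G_m(\eta_i\chi^{\mu_i\bbb})$ records exactly the vector $\vvv_i=\sum_{\xi^{\mu_i}=\eta_i}\xi$, is also correct and is precisely what the paper exploits.

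Where you go wrong is in naming the obstruction. You say the Weyl sum fails to decay only when $\FF$ is geometrically trivial on some positive-dimensional component of $Z$. But $\FF$ carries the Artin--Schreier factor $\LL_\psi(\sum_{j,\beta} s_{j\beta})$, and the linear form $\sum s_{j\beta}$ is never constant on a positive-dimensional subtorus of $\Gm^N$ (the coordinates are distinct monomials with coefficient $1$, so no cancellation); hence $H^{2\dim Z}_c(Z_{\bar k},\FF)=0$ automatically, and that is not the obstruction. The genuine issue sits one perverse degree lower and is invisible to the ``trivial on a component'' test. The paper sidesteps this by a change of viewpoint: instead of analyzing $H^\ast_c(Z,\FF)$ directly, it recognizes $\Sigma_1$ as $(q-1)^r q^{-|\ccc|/2}$ times the Frobenius trace of the stalk at $\mathbf 1$ of the convolution
\[
\KK=\mathop{\ast}_{i=1}^n\bigl(\delta_{\ttt_i^{\epsilon_i}}\ast\alpha_{i\ast}\HH(\psi,\eta_i)^{\ast\epsilon_i}\bigr)^{\ast|c_i|},
\]
which is a \emph{hypergeometric} perverse sheaf on $\Gm^r$ of Euler characteristic $1$. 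Gabber--Loeser's classification of such objects produces a discrete invariant $\Psi_2(\KK)\in\ZZ^{(\mathcal S\times\mathcal C(\Gmkk))}$, and Lemma~\ref{lemma} shows $\Psi_2(\KK)\neq 0\Rightarrow\HH^0(\KK)=0$, which is exactly the weight drop needed for Deligne to finish. The computation of $\Psi_2(\KK)$, via the sheaf-level identity $[\mu_i]_\ast\HH(\psi,\eta_i)\cong\delta_{\nu_i^{-\nu_i}}\ast\bigl(\ast_{\xi^{\mu_i}=\eta_i}\HH(\psi,\xi)\bigr)$, reduces to the statement that $\sum_{i:\bbb_i=\bbb}c_i\vvv_i\neq 0$ for some $\bbb$, which is what the hypothesis guarantees.

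So your proposal is not wrong in spirit, but the ``technical core'' you flag is where the real content lies, and your stated criterion for it is incorrect. Carrying out your direct approach would require controlling the full weight filtration of $H^\ast_c$ of an Artin--Schreier--Kummer sheaf on a high-dimensional torus, with no a priori ``middle-degree concentration'' available; the paper avoids this entirely by using that $\chi(\KK)=1$ pins $\KK$ down in the Gabber--Loeser group up to the single computable invariant $\Psi_2$.
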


Note that the condition holds, in particular, if all $\bbb_i$ are distinct, that is, if no two $\aaa_i$'s are proportional.

We recover Katz' result \cite[Chapter 9]{katz1988gauss} by taking $r=1$ and $\aaa_i=(1)$ for all $i=1,\ldots,n$.

\begin{corollary}
 Let $\eta_1,\ldots,\eta_n:k^\times\to\CC^\times$ be distinct characters and $\ttt_1,\ldots,\ttt_n\in k^\times$, then the elements
 $$
 (q^{-m/2}\chi(\ttt_1)G_m(\eta_1\chi),\ldots,q^{-m/2}\chi(\ttt_n)G_m(\eta_n\chi))
 $$
 for the $\chi:k_m^\times\to\CC^\times$ such that $\eta_i\chi\neq\mathbf 1$ for all $i$ become equidistributed in $(S^1)^n$ as $m\to\infty$.
\end{corollary}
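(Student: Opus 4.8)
The plan is to deduce the corollary directly from Theorem~\ref{main} by specializing the parameters to $r=1$ and $\aaa_i=(1)$ for every $i=1,\ldots,n$. With these choices a tuple $\chi=(\chi_1)$ is just a single multiplicative character of $k_m^\times$, so $\chi^{\aaa_i}=\chi_1^{1}=\chi$ and hence $\eta_i\chi^{\aaa_i}=\eta_i\chi$. Thus the subset $S_m$ of Theorem~\ref{main} becomes precisely the set of $\chi$ with $\eta_i\chi\neq\mathbf 1$ for all $i$, and the map $\Phi_m(\chi)$ becomes the tuple $(q^{-m/2}\chi(\ttt_1)G_m(\eta_1\chi),\ldots,q^{-m/2}\chi(\ttt_n)G_m(\eta_n\chi))$ appearing in the statement. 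So the corollary is exactly the conclusion of Theorem~\ref{main} for this data, once we check its hypothesis.

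First I would run the primitivity decomposition for the chosen $\aaa_i$. Since $(1)\in\ZZ^1$ is already primitive (its single coordinate is positive and the coordinates have gcd $1$), we get $\mu_i=1$ and $\bbb_i=(1)$ for every $i$. Consequently $\nu_i$, the prime-to-$p$ part of $\mu_i=1$, equals $1$, and the element $\vvv_i=\sum_{\xi^{\mu_i}=\eta_i}\xi$ collapses to the single summand $\eta_i$, because $\xi^1=\eta_i$ forces $\xi=\eta_i$. Hence each $\vvv_i$ is the basis vector $\eta_i\in\Char_k$ of $V_k$.

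It then remains only to verify the linear independence hypothesis of Theorem~\ref{main}. Since every $\bbb_i$ equals $(1)$, the only $\bbb\in\ZZ^1$ for which the associated set is nonempty is $\bbb=(1)$, and for that value the set is $\{\vvv_1,\ldots,\vvv_n\}=\{\eta_1,\ldots,\eta_n\}$. By hypothesis the $\eta_i$ are pairwise distinct characters of $k^\times$, hence pairwise distinct elements of the basis $\Char_k$ of $V_k$, so $\{\eta_1,\ldots,\eta_n\}$ is linearly independent. The hypothesis of Theorem~\ref{main} is therefore satisfied and the claimed equidistribution in $(S^1)^n$ follows immediately. I do not expect any genuine obstacle here: all the content of the corollary is already contained in Theorem~\ref{main}, and the only point requiring (minor) care is the translation of the distinctness of the $\eta_i$ into the linear independence of the $\vvv_i$, which is transparent precisely because $\mu_i=1$ makes each $\vvv_i$ a single basis character.
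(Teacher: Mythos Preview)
Your argument is correct and is exactly the paper's own: specialize Theorem~\ref{main} to $r=1$, $\aaa_i=(1)$, observe that $\mu_i=1$ and hence $\vvv_i=\eta_i$, and conclude that distinctness of the $\eta_i$ gives the required linear independence in $V_k$. The paper states this in a single line, but the content is identical.
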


\begin{proof}
 Here $\vvv_i=\eta_i$ for all $i$, which are clearly linearly independent in $V_k$ as they are distinct characters.
\end{proof}

Another interesting case is when $r=1$ and all $\eta_i$ are trivial:

\begin{corollary}\label{cor2}
 Let $0<d_1<\ldots<d_n$ be prime to $p$ integers and $\ttt_1,\ldots,\ttt_n\in k^\times$, then the elements
 $$
 (q^{-m/2}\chi(\ttt_1)G_m(\chi^{d_1}),\ldots,q^{-m/2}\chi(\ttt_n)G_m(\chi^{d_n}))
 $$
 for the $\chi:k_m^\times\to\CC^\times$ such that $\chi^{\mathrm{lcm}(d_1,\cdots, d_n)}\neq\mathbf 1$ become equidistributed in $(S^1)^n$ as $m\to\infty$.
\end{corollary}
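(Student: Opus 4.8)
The plan is to apply Theorem~\ref{main} with $r=1$, $\aaa_i=(d_i)$, $\eta_i=\mathbf 1$ and the given $\ttt_i\in k^\times$. Since $r=1$, the only primitive tuple is $(1)$, so the canonical decomposition $\aaa_i=\mu_i\bbb_i$ reads $\mu_i=d_i$ and $\bbb_i=(1)$ for every $i$. As $d_i$ is prime to $p$, the prime-to-$p$ part of $\mu_i$ is $\nu_i=d_i$, and $\vvv_i=\sum_{\xi^{d_i}=\mathbf 1}\xi$ is the sum of the $d_i$ characters in $\Char_k$ of order dividing $d_i$. Because all the $\bbb_i$ coincide (they all equal $(1)$), the hypothesis of Theorem~\ref{main} collapses to the single requirement that $\vvv_1,\ldots,\vvv_n$ be linearly independent in $V_k$; so it suffices to verify this one statement, after which the theorem delivers the equidistribution directly.

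To prove the linear independence I would decompose each $\vvv_i$ by exact order. For every integer $e\geq 1$ prime to $p$, let $w_e\in V_k$ be the sum of all characters in $\Char_k$ of order exactly $e$; there are $\varphi(e)\geq 1$ of them, so $w_e\neq 0$, and distinct $w_e$ are supported on disjoint subsets of the basis $\Char_k$, hence form a linearly independent family. Since the characters of order dividing $d_i$ are partitioned according to their exact order $e\mid d_i$ (and every such $e$ is automatically prime to $p$), this gives $\vvv_i=\sum_{e\mid d_i}w_e$.

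Now suppose $\sum_{i=1}^n c_i\vvv_i=0$ with $c_i\in\QQ$. Regrouping yields $\sum_e\bigl(\sum_{i:\,e\mid d_i}c_i\bigr)w_e=0$, and linear independence of the $w_e$ forces $\sum_{i:\,e\mid d_i}c_i=0$ for every $e$ prime to $p$, in particular for $e=d_j$, $j=1,\ldots,n$. Using $d_1<\cdots<d_n$, the index set $\{i:d_j\mid d_i\}$ consists of $j$ together with some indices $i>j$, so reading these relations in the order $j=n,n-1,\ldots,1$ a descending induction gives $c_n=c_{n-1}=\cdots=c_1=0$. Hence the $\vvv_i$ are linearly independent and Theorem~\ref{main} applies.

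The whole statement is routine once the data are matched to Theorem~\ref{main}; the only genuine content is this linear independence, and it is exactly here that the assumptions are used: distinctness of the $d_i$ makes the divisibility system triangular, and primeness to $p$ guarantees each $w_{d_j}$ is a nonzero honest sum of $\varphi(d_j)$ basis characters. I would expect no serious obstacle, the triangular (Möbius-type) structure of the divisibility relations being the crux of the argument.
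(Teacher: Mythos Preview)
Your proof is correct and follows the same approach as the paper: apply Theorem~\ref{main} with $r=1$, $\aaa_i=(d_i)$, $\eta_i=\mathbf 1$, and verify the linear independence of the $\vvv_i$ via a triangular argument based on $d_1<\cdots<d_n$. The paper phrases this last step more tersely---observing that $\vvv_i$ contains a character of exact order $d_i$, which cannot appear in any $\vvv_j$ with $j<i$---while you formalize the same idea through the decomposition $\vvv_i=\sum_{e\mid d_i}w_e$; the underlying triangularity is identical.
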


\begin{proof}
 Now $\vvv_i=\sum_{\xi^{d_i}=\mathbf 1}\xi$ are linearly independent, since for all $i$, $\vvv_i$ contains a character of order $d_i$ with non-zero coefficient, so it can not be a linear combination of the $\vvv_j$ for $j<i$, which are themselves linear combinations of characters of order $<d_i$.
\end{proof}

Closely related to Gauss sums are Jacobi sums, defined as
$$
J(\chi_1,\ldots,\chi_n):=(-1)^{n-1}\sum_{x_1+\ldots+x_1=n}\chi_1(x_1)\cdots\chi_n(x_n)
$$
for any characters $\chi_1,\ldots,\chi_n:k^\times\to\CC^\times$. If all $\chi_i$ and their product are non-trivial, we have the identity (eg. \cite[Theorem 10.3.1]{berndt1998gauss})
$$
J(\chi_1,\ldots,\chi_n)=\frac{G(\chi_1)\cdots G(\chi_n)}{G(\chi_1\cdots\chi_n)}
$$
and, in particular, $|J(\chi_1,\ldots,\chi_n)|=q^{(n-1)/2}$. We denote by $J_m(\chi_1,\ldots,\chi_n)$ the corresponding Jacobi sums over $k_m$.

From our theorem we can also deduce some equidistribution results for Jacobi sums. The following was proven for two-variable Jacobi sums in \cite{katz-zheng}, and in general in \cite{lu-zheng-zheng}

\begin{corollary}
 For $n\geq 2$, the elements
 $$
 q^{-m(n-1)/2}J_m(\chi_1,\ldots,\chi_n)
 $$
 for the non-trivial $\chi_1,\ldots,\chi_n:k_m^\times\to\CC^\times$ such that $\chi_1\cdots\chi_n\neq\mathbf 1$ become equidistributed in $S^1$ as $m\to\infty$.
\end{corollary}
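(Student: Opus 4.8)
The plan is to reduce the statement to Theorem \ref{main} through the identity expressing Jacobi sums as ratios of Gauss sums. Over $k_m$, for non-trivial $\chi_1,\ldots,\chi_n$ whose product is non-trivial we have
$$
J_m(\chi_1,\ldots,\chi_n)=\frac{G_m(\chi_1)\cdots G_m(\chi_n)}{G_m(\chi_1\cdots\chi_n)},
$$
so, since each non-trivial Gauss sum over $k_m$ has absolute value $q^{m/2}$,
$$
q^{-m(n-1)/2}J_m(\chi_1,\ldots,\chi_n)=\frac{\prod_{i=1}^n\left(q^{-m/2}G_m(\chi_i)\right)}{q^{-m/2}G_m(\chi_1\cdots\chi_n)}.
$$
This exhibits the normalized Jacobi sum as the image of the $(n+1)$-tuple of normalized Gauss sums under the continuous group homomorphism $F:(S^1)^{n+1}\to S^1$, $F(z_1,\ldots,z_n,w)=z_1\cdots z_n w^{-1}$, which is clearly surjective.

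Next I would apply Theorem \ref{main} with $r=n$ and with $n+1$ monomials, namely $\aaa_i=\eee_i$ (the standard basis vectors) for $i=1,\ldots,n$ and $\aaa_{n+1}=(1,\ldots,1)$, taking all $\eta_i=\mathbf 1$ and all $\ttt_i=(1,\ldots,1)$. With these choices $\chi^{\aaa_i}=\chi_i$ for $i\leq n$ and $\chi^{\aaa_{n+1}}=\chi_1\cdots\chi_n$, so that $S_m$ is exactly the set of $\chi=(\chi_1,\ldots,\chi_n)$ with all $\chi_i$ and $\chi_1\cdots\chi_n$ non-trivial, and $\Phi_m(\chi)$ is precisely the $(n+1)$-tuple appearing above. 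Each of these $n+1$ exponent vectors is already primitive, so $\mu_i=1$ and $\vvv_i=\mathbf 1$ for every $i$; and since $n\geq 2$ the vectors $\eee_1,\ldots,\eee_n,(1,\ldots,1)$ are pairwise distinct. Hence no two of the $\aaa_i$ are proportional, and the linear independence hypothesis of Theorem \ref{main} holds vacuously, since each primitive vector $\bbb$ occurs for at most one index. Therefore $\{\Phi_m(\chi)\mid\chi\in S_m\}$ becomes equidistributed in $(S^1)^{n+1}$ as $m\to\infty$.

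Finally I would transport the equidistribution through $F$. Since $F$ is a surjective continuous homomorphism of compact abelian groups, it carries the Haar measure of $(S^1)^{n+1}$ to a translation-invariant probability measure on $S^1$ (translation by $F(y)$ pulls back to translation by $y$, which preserves Haar measure), which by uniqueness must be the Haar measure of $S^1$. Because pushing an equidistributed family forward by a continuous map produces a family equidistributed with respect to the pushforward measure --- for continuous $g:S^1\to\CC$ the function $g\circ F$ is continuous, and the averages of $g\circ F$ over $\{\Phi_m(\chi)\mid\chi\in S_m\}$ converge to $\int g\circ F$, that is, to the integral of $g$ against $F_*\mathrm{Haar}$ --- the family $\{q^{-m(n-1)/2}J_m(\chi_1,\ldots,\chi_n)\}=\{F(\Phi_m(\chi))\mid\chi\in S_m\}$ becomes equidistributed in $S^1$. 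The only genuinely structural point beyond Theorem \ref{main} is this identification of $F_*\mathrm{Haar}$ with Haar measure; the verification of the hypotheses is immediate precisely because the chosen exponent vectors are pairwise non-proportional.
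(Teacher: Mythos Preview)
Your proof is correct and follows essentially the same approach as the paper: apply Theorem \ref{main} with $r=n$, the exponent vectors $\eee_1,\ldots,\eee_n,(1,\ldots,1)$ (pairwise non-proportional for $n\geq 2$), trivial $\eta_i$ and $\ttt_i$, and then push forward by the surjective homomorphism $(z_1,\ldots,z_n,w)\mapsto z_1\cdots z_n/w$, which carries Haar measure to Haar measure. You supply more detail on the primitivity of the $\aaa_i$ and on why the pushforward of Haar is Haar, but the argument is the same.
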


\begin{proof}
 By the theorem, the elements
 $$
 (q^{-m/2}G_m(\chi_1),\ldots,q^{-m/2}G_m(\chi_n),q^{-m/2}G_m(\chi_1\cdots\chi_n))
 $$
 become equidistributed in $(S^1)^{n+1}$ with respect to the Haar measure (since $\aaa_1=(1,0,\ldots,0),\ldots,\aaa_n=(0,0,\ldots,1),\aaa_{n+1}=(1,1,\ldots,1)$ are pairwise non-proportional). Then their images by the homomorphism $\varphi:(S^1)^{n+1}\to S^1$ given by $(t_1,\ldots,t_n,t_{n+1})\mapsto t_1\cdots t_n/t_{n+1}$, which are precisely the normalized Jacobi sums, become equidistributed with respect of the direct image by $\varphi$ of the Haar measure, which is the Haar measure on $S^1$.
\end{proof}

We can also fix some of the characters, generalizing the two-variable version proven in \cite[Theorem 1.1, 1.2]{ping-xi}

\begin{corollary}
 Fix $n$ positive integers $e_1,\ldots,e_n$ and an $e_i$-tuple of non-trivial characters $\eta_{i,1},\ldots,\eta_{i,e_i}:k^\times\to\CC^\times$ for every $i=1,\ldots,n$ such that the $\prod_{j=1}^{e_i} \eta_{i,j}$ are distinct for $i=1,\ldots,n$. Then the elements
 $$
 (q^{-m(d+e_1-1)/2}J_m(\chi_1,\ldots,\chi_d,\eta_{1,1},\ldots,\eta_{1,e_1}),\ldots,q^{-m(d+e_n-1)/2}J_m(\chi_1,\ldots,\chi_d,\eta_{n,1},\ldots,\eta_{n,e_n}))
 $$
 for the non-trivial $\chi_1,\ldots,\chi_d:k_m^\times\to\CC^\times$ such that $\chi_1\cdots\chi_d\eta_{i,1}\cdots\eta_{i,e_i}\neq\mathbf 1$ for all $i$ become equidistributed in $(S^1)^n$ as $m\to\infty$.
\end{corollary}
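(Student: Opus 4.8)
The plan is to write the normalized Jacobi sums as the image, under a continuous homomorphism of tori, of a family of normalized Gauss sums to which Theorem~\ref{main} applies, and then transport equidistribution along that homomorphism. Set $\eta_i:=\eta_{i,1}\cdots\eta_{i,e_i}$, so that by hypothesis the $\eta_i$ are distinct. Using the identity $J=G(\chi_1)\cdots G(\chi_n)/G(\chi_1\cdots\chi_n)$ together with $G_m(\chi)=G(\chi)^m$, for every $\chi=(\chi_1,\ldots,\chi_d)$ in the index set we have
$$
q^{-m(d+e_i-1)/2}J_m(\chi_1,\ldots,\chi_d,\eta_{i,1},\ldots,\eta_{i,e_i})=c_{i,m}\cdot\frac{\prod_{l=1}^d q^{-m/2}G_m(\chi_l)}{q^{-m/2}G_m(\chi_1\cdots\chi_d\eta_i)},
$$
where $c_{i,m}:=\prod_{j=1}^{e_i}q^{-m/2}G_m(\eta_{i,j})\in S^1$ does not depend on $\chi$. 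Thus the whole $\chi$-dependence lives in the $d$ Gauss sums $G_m(\chi_l)$ (common to all $n$ coordinates) and the $n$ Gauss sums $G_m(\chi_1\cdots\chi_d\eta_i)$.

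I would then apply Theorem~\ref{main} with $r=d$, all shift tuples $\ttt$ trivial, and the $d+n$ pairs $(\aaa,\eta)$ equal to $(\eee_l,\mathbf 1)$ for $l=1,\ldots,d$ and $((1,\ldots,1),\eta_i)$ for $i=1,\ldots,n$, where $\eee_l$ is the $l$-th standard basis vector. All these monomials are primitive, so each $\mu=1$ and each $\vvv$ is a single character: $\vvv=\mathbf 1$ for the first $d$ pairs and $\vvv=\eta_i$ for the last $n$. The set $S_m$ it produces is exactly $\{\chi:\chi_l\neq\mathbf 1\ \forall l,\ \chi_1\cdots\chi_d\eta_i\neq\mathbf 1\ \forall i\}$, matching the index set of the corollary. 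To check the hypothesis I group the monomials by primitive part: when $d\geq 2$ each $\eee_l$ occurs once, contributing the single (hence independent) vector $\mathbf 1$, and the $n$ copies of $(1,\ldots,1)$ carry $\eta_1,\ldots,\eta_n$, which are linearly independent in $V_k$ because they are distinct characters. Theorem~\ref{main} then gives equidistribution in $(S^1)^{d+n}$ of the tuple $\Phi_m(\chi)$ whose first $d$ entries are the $q^{-m/2}G_m(\chi_l)$ and whose last $n$ entries are the $q^{-m/2}G_m(\chi_1\cdots\chi_d\eta_i)$.

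To finish I would use the continuous homomorphism $\varphi:(S^1)^{d+n}\to(S^1)^n$ sending $(g_1,\ldots,g_d,h_1,\ldots,h_n)$ to $(g_1\cdots g_d/h_1,\ldots,g_1\cdots g_d/h_n)$, so that the $i$-th coordinate of $\varphi(\Phi_m(\chi))$ equals $c_{i,m}^{-1}$ times the normalized Jacobi sum. Equidistribution is then verified by Weyl's criterion: for a nontrivial character $\rho(y)=\prod_i y_i^{k_i}$ of $(S^1)^n$ the composite $\rho\circ\varphi$ is still nontrivial (in $\rho\circ\varphi$ the exponent of $h_i$ is $-k_i$, nonzero for any $i$ with $k_i\neq 0$), so the averages of $\rho\circ\varphi$ over $\Phi_m(\chi)$ tend to $0$. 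Multiplying each coordinate by the constants $c_{i,m}$ only inserts a prefactor $\prod_i c_{i,m}^{k_i}$ of modulus one, which does not affect the vanishing of the limit; hence the normalized Jacobi sums equidistribute. The point needing the most care is precisely this last step: since the $c_{i,m}$ depend on $m$ one cannot test against a single fixed function, but the Weyl-criterion formulation absorbs them into harmless unit-modulus factors. (In the degenerate case $d=1$ the monomials $\eee_1$ and $(1,\ldots,1)$ coincide, and one additionally needs each $\eta_i\neq\mathbf 1$ to keep $\{\mathbf 1,\eta_1,\ldots,\eta_n\}$ linearly independent.)
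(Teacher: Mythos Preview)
Your argument is correct and follows essentially the same route as the paper: apply Theorem~\ref{main} with $r=d$ and the $d+n$ monomials $\eee_1,\ldots,\eee_d,(1,\ldots,1)$ to get equidistribution of the Gauss tuple in $(S^1)^{d+n}$, then push forward along the surjective homomorphism $(g_1,\ldots,g_d,h_1,\ldots,h_n)\mapsto (g_1\cdots g_d/h_i)_i$ and translate by the constants $c_{i,m}$. Your treatment is in fact slightly more careful than the paper's: you explicitly handle the $m$-dependence of the constants $c_{i,m}$ via Weyl's criterion (the paper simply writes the translated map as a single $\varphi$ and invokes ``surjective homomorphism followed by a translation''), and you correctly flag the $d=1$ edge case where $\eee_1=(1,\ldots,1)$ forces the additional requirement $\eta_i\neq\mathbf 1$.
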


\begin{proof}
 By the theorem, the elements
 $$
 (q^{-m/2}G_m(\chi_1),\ldots,q^{-m/2}G_m(\chi_d),q^{-m/2}G_m(\chi_1\cdots\chi_d\eta_{1,1}\cdots\eta_{1,e_1}),\ldots,q^{-m/2}G_m(\chi_1\cdots\chi_d\eta_{n,1}\cdots\eta_{n,e_n}))
 $$
 become equidistributed in $(S^1)^{d+n}$ with respect to the Haar measure. Then their images by the homomorphism $\varphi:(S^1)^{d+n}\to (S^1)^n$ given by
 $$(t_1,\ldots,t_d,t_{d+1},\ldots,t_{d+n})\mapsto (q^{-{me_i}/2}G_m(\eta_{i,1})\cdots G_m(\eta_{i,e_i})t_1\cdots t_d/t_{d+i})_{i=1}^n,$$
 which are precisely the given $n$-tuples of normalized Jacobi sums, become equidistributed with respect to the direct image by $\varphi$ of the Haar measure, which is the Haar measure on $(S^1)^n$ (since $\varphi$ is the composition of a surjective homomorphism followed by a translation).
\end{proof}

\begin{corollary}
 Let $d_1,\ldots,d_n$ be positive prime to $p$ integers. Then the elements
 $$
 q^{-m(n-1)/2}J_m(\chi^{d_1},\ldots,\chi^{d_n})
 $$
 for the $\chi:k_m^\times\to\CC^\times$ such that $\chi^{d_1\cdots d_n}\neq\mathbf 1$ become equidistributed in $S^1$ as $m\to\infty$. 
\end{corollary}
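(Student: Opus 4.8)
The plan is to reduce to the Gauss-sum equidistribution of Theorem \ref{main} and push forward, exactly as in the preceding corollaries. Set $s:=d_1+\cdots+d_n$. Whenever all the characters $\chi^{d_i}$ and their product $\chi^{s}$ are non-trivial, the Jacobi--Gauss identity gives $J_m(\chi^{d_1},\ldots,\chi^{d_n})=G_m(\chi^{d_1})\cdots G_m(\chi^{d_n})/G_m(\chi^{s})$, so after normalizing,
$$q^{-m(n-1)/2}J_m(\chi^{d_1},\ldots,\chi^{d_n})=\varphi\big(q^{-m/2}G_m(\chi^{d_1}),\ldots,q^{-m/2}G_m(\chi^{d_n}),q^{-m/2}G_m(\chi^{s})\big),$$
where $\varphi:(S^1)^{n+1}\to S^1$, $(t_1,\ldots,t_n,t_{n+1})\mapsto t_1\cdots t_n/t_{n+1}$, is a surjective homomorphism whose direct image of the Haar measure is the Haar measure on $S^1$. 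Hence it suffices to equidistribute the $(n+1)$-tuples appearing in the argument of $\varphi$.

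These tuples are the $\Phi_m(\chi)$ of Theorem \ref{main} for $r=1$, with monomials $\aaa_i=(d_i)$ for $i\le n$ and $\aaa_{n+1}=(s)$, all $\eta_i=\mathbf 1$ and all $\ttt_i=1$. Since $r=1$ forces every primitive vector to be $(1)$, all $\bbb_i=(1)$ and the hypothesis reduces to linear independence of the whole family $\vvv_1,\ldots,\vvv_{n+1}$ in $V_k$. Here $\vvv_i$ is the sum of the characters of order dividing $d_i$ for $i\le n$, and $\vvv_{n+1}$ the sum of those of order dividing $\nu$, the prime-to-$p$ part of $s$. Grouping characters by their exact order, each $\vvv_i$ becomes the indicator of the set of divisors of $d_i$ (resp. $\nu$) in the divisibility lattice, and a maximal-element argument shows such divisor-indicators are linearly independent exactly when $d_1,\ldots,d_n,\nu$ are pairwise distinct. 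Under that condition Theorem \ref{main} applies and $\varphi$ transports the conclusion to $S^1$.

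Two points remain. First, the index sets differ: the corollary ranges over $\chi$ with $\chi^{d_1\cdots d_n}\neq\mathbf 1$, while $S_m$ omits the $\chi$ for which some $\chi^{d_i}=\mathbf 1$ or $\chi^{s}=\mathbf 1$; but each such condition excludes at most $\max(d_1\cdots d_n,\,s)$ characters of the cyclic group $\widehat{k_m^\times}$, a bound independent of $m$, whereas $|S_m|\to\infty$, so the two families share the same limiting distribution. Second---and this is the main obstacle---the linear-independence hypothesis can genuinely fail: the $d_i$ need not be distinct, and $\nu$ may coincide with some $d_j$ when $p\mid s$, in which case the full $(n+1)$-tuple does \emph{not} equidistribute and the clean push-forward breaks down. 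For the one-dimensional Jacobi coordinate, however, only the weight $(1,\ldots,1,-1)$ enters Weyl's criterion, so the true obstruction is $\vvv_1+\cdots+\vvv_n-\vvv_{n+1}$ alone; comparing the coefficient of the trivial character ($n$ on the left, $1$ on the right) shows it is non-zero for every $n\ge2$. I would therefore expect the statement-level proof to require $d_1,\ldots,d_n,\nu$ distinct, with the fully general case obtained by re-running the Weyl-criterion step underlying Theorem \ref{main} with this single weight, where the weaker non-vanishing $\vvv_1+\cdots+\vvv_n\neq\vvv_{n+1}$ is exactly what is needed.
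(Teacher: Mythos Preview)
Your strategy is sound, and your use of $s=d_1+\cdots+d_n$ for the denominator is the correct reading of the Gauss--Jacobi identity (the paper's proof writes $d_1\cdots d_n$ there, which appears to be a slip, since $\chi^{d_1}\cdots\chi^{d_n}=\chi^{\sum d_i}$). The main thing the paper does differently is a de-duplication step you did not use: rather than feeding the full $(n+1)$-tuple $(d_1,\ldots,d_n,s)$ into Theorem~\ref{main}, it first collapses the $d_i$ to their distinct values $d_{n_1}<\cdots<d_{n_e}$ with multiplicities $m_j$, applies Corollary~\ref{cor2} to the resulting $(e+1)$-tuple of distinct exponents, and then pushes forward via $(t_1,\ldots,t_e,t_{e+1})\mapsto t_1^{m_1}\cdots t_e^{m_e}/t_{e+1}$. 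This handles the ``repeated $d_i$'' obstruction as a black box, without re-entering the Weyl-sum computation underlying Theorem~\ref{main}.

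Your worry about the case $p\mid s$ is, however, genuine and is not addressed by the paper's short argument either: even after de-duplication, if the prime-to-$p$ part $\nu$ of $s$ coincides with one of the $d_{n_j}$ (for instance $p=2$, $d_1=d_2=3$, so $s=6$ and $\nu=3$), then $\vvv_{e+1}=\vvv_j$ and Corollary~\ref{cor2} does not apply to the $(e+1)$-tuple. Your single-weight remedy---observing that for the one-dimensional target only the weights $c\cdot(1,\ldots,1,-1)$ matter, and that $\sum_{i\le n}\vvv_i\neq\vvv_{n+1}$ because the coefficient of the trivial character is $n$ versus $1$---is the correct way to close this for every $n\ge 2$. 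So the paper's de-duplication trick is the simplification you were missing, but your route, though it reopens the proof of Theorem~\ref{main}, is actually the more robust one.
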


\begin{proof}
 Let $d_{n_1}<\ldots<d_{n_e}$ be the distinct $d_i$'s, each $d_{n_j}$ appearing $m_j$ times. By Corollary \ref{cor2}, the elements
 $$
 (q^{-m/2}G_m(\chi^{d_{n_1}}),\ldots,q^{-m/2}G_m(\chi^{d_{n_e}}),q^{-m/2}G_m(\chi^{d_1\cdots d_n}))
 $$
 for the $\chi:k_m^\times\to\CC^\times$ such that $\chi^{d_1\cdots d_n}\neq\mathbf 1$ become equidistributed in $(S^1)^{e+1}$ as $m\to\infty$. Then their images by the map $\varphi:(S^1)^{e+1}\to S^1$ given by
 $$(t_1,\ldots,t_e,t_{e+1})\mapsto t_1^{m_1}\cdots t_e^{m_e}/t_{e+1},$$
 which are precisely the normalized Jacobi sums, become equidistributed with respect of the direct image by $\varphi$ of the Haar measure, which is the Haar measure on $S^1$.
\end{proof}

From the main theorem we can easily deduce a more general version where we allow using different additive characters in each coordinate:

\begin{corollary}Under the hypothesis of Theorem \ref{main}, suppose given also elements $\alpha_1,\ldots,\alpha_n\in k^\times$, and let
$$
\Phi_m(\chi)=(q^{-m/2}\chi(\ttt_1)G_m(\alpha_1,\eta_1\chi^{\aaa_1}),\ldots,q^{-m/2}\chi(\ttt_n)G_m(\alpha_n,\eta_n\chi^{\aaa_n})).
$$
Then the sets $\{\Phi_m(\chi)|\chi\in S_m\}$ become equidistributed in $(S^1)^n$ with respect to the Haar measure as $m\to\infty$.
\end{corollary}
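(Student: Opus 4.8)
The plan is to reduce the statement directly to Theorem \ref{main} by means of the identity $G_m(\alpha,\chi)=\overline\chi(\alpha)G_m(\chi)$ for $\alpha\in k_m^\times$, which is the $k_m$-version of the identity $G(\alpha,\chi)=\overline\chi(\alpha)G(\chi)$ recalled in the introduction and holds over $k_m$ by the same computation. The point is that changing the additive character in the $i$-th coordinate only multiplies the corresponding Gauss sum by a root of unity which, after regrouping, amounts to replacing $\ttt_i$ by a new element of $(k^\times)^r$ and multiplying the coordinate by a constant of modulus one.

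Concretely, for each $i$ and each $\chi=(\chi_1,\ldots,\chi_r)\in S_m$ I would write, using the identity above,
$$
\chi(\ttt_i)G_m(\alpha_i,\eta_i\chi^{\aaa_i})=\chi(\ttt_i)\,\overline{\eta_i}(\alpha_i)\,\overline{\chi^{\aaa_i}}(\alpha_i)\,G_m(\eta_i\chi^{\aaa_i}).
$$
Since $\overline{\chi^{\aaa_i}}(\alpha_i)=\prod_{j=1}^r\chi_j(\alpha_i^{-a_{ij}})$, writing $\ttt_i=(t_{i1},\ldots,t_{ir})$ the product $\chi(\ttt_i)\,\overline{\chi^{\aaa_i}}(\alpha_i)$ equals $\chi(\ttt_i')$, where $\ttt_i':=(t_{i1}\alpha_i^{-a_{i1}},\ldots,t_{ir}\alpha_i^{-a_{ir}})\in(k^\times)^r$. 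The remaining factor $c_{i,m}:=\overline{\eta_i}(\alpha_i)=\overline{\eta_i(\mathrm N_{k_m/k}(\alpha_i))}=\overline{\eta_i(\alpha_i)}^m$ is a constant of modulus one independent of $\chi$. Hence the new $\Phi_m(\chi)$ equals $(c_{1,m},\ldots,c_{n,m})\cdot\Phi_m'(\chi)$, where $\Phi_m'$ is exactly the map of Theorem \ref{main} attached to the same data with each $\ttt_i$ replaced by $\ttt_i'$.

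The subset $S_m$ and the hypothesis of Theorem \ref{main} (linear independence of the $\vvv_i$) depend only on the $\eta_i$ and $\aaa_i$, not on the $\ttt_i$, so the theorem applies verbatim to $\Phi_m'$ and yields equidistribution of $\{\Phi_m'(\chi)\mid\chi\in S_m\}$ in $(S^1)^n$. It then remains to observe that left translation by the sequence $(c_{1,m},\ldots,c_{n,m})\in(S^1)^n$ preserves equidistribution with respect to the Haar measure. This is the only place where a little care is needed, because these constants depend on $m$; but by Weyl's criterion it suffices to check that $\frac{1}{|S_m|}\sum_{\chi\in S_m}\Phi_m(\chi)^{\mathbf k}\to 0$ for every non-zero $\mathbf k\in\ZZ^n$, and this sum equals $(c_{1,m},\ldots,c_{n,m})^{\mathbf k}$ times the corresponding sum for $\Phi_m'$, which tends to $0$; since the scalar has modulus one the product also tends to $0$. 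I do not expect any genuine obstacle here: the whole content is the bookkeeping of the previous paragraph, and the $m$-dependence of the translation is absorbed by the modulus-one bound in Weyl's criterion.
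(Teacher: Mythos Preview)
Your argument is correct and follows essentially the same route as the paper: both use $G_m(\alpha,\chi)=\overline\chi(\alpha)G_m(\chi)$ to rewrite each coordinate as a unit-modulus constant times the corresponding coordinate of the Theorem~\ref{main} map with $\ttt_i$ replaced by $\ttt_i\alpha_i^{-\aaa_i}$, and then conclude by translation-invariance of Haar measure. If anything, you are slightly more careful than the paper in making explicit that the translation depends on $m$ and handling this via Weyl's criterion; the paper simply says ``translates \ldots which are themselves equidistributed'' without commenting on the $m$-dependence.
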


\begin{proof}
 We have
 $$
 \Phi_m(\chi)=(q^{-m/2}\chi(\ttt_1)\eta_1\chi^{\aaa_1}(\alpha_1^{-1})G_m(\eta_1\chi^{\aaa_1}),\ldots,q^{-m/2}\chi(\ttt_n)\eta_n\chi^{\aaa_n}(\alpha_n^{-1})G_m(\eta_n\chi^{\aaa_n}))=
 $$
 $$
 =(q^{-m/2}\chi(\ttt_1\alpha_1^{-\aaa_1})\eta_1(\alpha_1^{-1})G_m(\eta_1\chi^{\aaa_1}),\ldots,q^{-m/2}\chi(\ttt_n\alpha_n^{-\aaa_n})\eta_n(\alpha_n^{-1})G_m(\eta_n\chi^{\aaa_n}))
 $$
 which are the translates by $(\eta_1(\alpha_1^{-1}),\ldots,\eta_n(\alpha_n^{-1}))$ of the elements
 $$
 (q^{-m/2}\chi(\ttt_1\alpha_1^{-\aaa_1})G_m(\eta_1\chi^{\aaa_1}),\ldots,q^{-m/2}\chi(\ttt_n\alpha_n^{-\aaa_n})G_m(\eta_n\chi^{\aaa_n})),
 $$
 which are themselves equidistributed by the theorem.
\end{proof}

From Katz' result \cite[Theorem 9.5]{katz1988gauss} and the Hasse-Davenport product formula it can be easily deduced that all monomial relations among Gauss sums of the form $G(\eta\chi^n)$ for different $\eta$ and $n\in\mathbb Z$ which hold for ``almost all'' characters $\chi$ are a combination of the identities (\ref{elem1}), (\ref{elem2}) and (\ref{h-d}) above. The second main result of the article uses Theorem \ref{main} to show that the same holds true in the multi-variable case. The following statement will be made precise in section \ref{sec3}:

\begin{theorem}\label{main2}
 Suppose given $\eta_i$, $\aaa_i$ as in Theorem \ref{main} and integers $\epsilon_i$ for $i=1,\ldots,n$, an element $\ttt\in (k^\times)^r$, a non-zero integer $N$, a subset $U_m\subseteq T_m$ for every $m\geq 1$ and a sequence of complex numbers $\{D_m\}_{m\geq 1}$ such that
 $$
 \lim_{m\to\infty}\frac{|U_m|}{q^m}=1
 $$
 and
 $$
 \left(\chi(\ttt)\prod_{i=1}^{n} G_m(\eta_i\chi^{\aaa_i})^{\epsilon_i}\right)^N=D_m
 $$
 for every $m\geq 0$ and every $\chi\in U_m$. Then the expression
 $$
 \prod_{i=1}^{n} G(\eta_i\chi^{\aaa_i})^{\epsilon_i}
 $$
 is a product of expressions of the form
 $$
 G(\eta\chi^\aaa)G(\overline\eta\chi^{-\aaa}),
 $$
 $$
 G(\eta^p\chi^{p\aaa})^{-1}G(\eta\chi^\aaa)
 $$
 and
 $$
 G(\eta^d\chi^{d\aaa})^{-1}\prod_{\xi^d=\mathbf 1}G(\eta\xi\chi^\aaa)
 $$
 for some $\eta$, $\aaa$ and $d|q-1$.
\end{theorem}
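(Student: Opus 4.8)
The plan is to recast the statement in purely lattice-theoretic terms and then feed it to Theorem \ref{main}. Let $F$ be the free abelian group (written additively) on symbols $[\psi,\aaa]$ standing for $G(\psi\chi^{\aaa})$, with $\psi\in\Char_k$ and $\aaa\in\ZZ^r\setminus\{0\}$, and let $R\subseteq F$ be the subgroup generated by the three families of \emph{trivial relations} $[\psi,\aaa]+[\bar\psi,-\aaa]$, $\ [\psi^p,p\aaa]-[\psi,\aaa]$ and $[\psi^d,d\aaa]-\sum_{\xi^d=\mathbf 1}[\psi\xi,\aaa]$. The conclusion I must prove is exactly that $x:=\sum_{i=1}^n\epsilon_i[\eta_i,\aaa_i]$ lies in $R$. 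To detect this I would build a \emph{content} homomorphism $\kappa\colon F\to\bigoplus_{\bbb}V_k$ (one copy of $V_k$ for each primitive $\bbb$) sending $[\psi,\aaa]$ with $\aaa=\mu\bbb$ to $\sum_{\xi^\mu=\psi}\xi$ in the $\bbb$-summand when $\mu>0$ and to $-\sum_{\xi^{|\mu|}=\bar\psi}\xi$ when $\mu<0$. A short computation—using $p\nmid q-1$ for the Frobenius family and, for Hasse--Davenport, the bijection $\xi\mapsto\xi^{\mu}\psi^{-1}$ between $\{\xi:\xi^{\mu d}=\psi^d\}$ and pairs (root, $d$-th root of unity)—shows each generating family maps to $0$, so $R\subseteq\ker\kappa$; note $\kappa([\eta_i,\aaa_i])=\pm\vvv_i$ in the $\bbb_i$-summand.

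The completeness statement $\ker\kappa=R$ is the algebraic heart. Let $A\subseteq F$ be generated by the \emph{atomic} symbols $[\psi,\bbb]$ with $\bbb$ primitive. Stripping the $p$-part of $\mu$ with the Frobenius relation, then applying Hasse--Davenport with $d=\nu$ to break $[\psi,\pm\nu\bbb]$ into $\sum_{\xi^\nu=\psi}[\xi,\pm\bbb]$, and finally using conjugation to replace each $[\xi,-\bbb]$ by $-[\bar\xi,\bbb]$, one sees that every generator of $F$ is congruent modulo $R$ to an element of $A$, so $F=A+R$. Since $\kappa$ restricts to an isomorphism $A\xrightarrow{\sim}\bigoplus_\bbb V_k$ (the atomic symbols map bijectively onto the basis $\Char_k$ of each summand), any $x\in\ker\kappa$ is $\equiv a\pmod R$ with $a\in A$ and $\kappa(a)=0$, whence $a=0$ and $x\in R$. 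Thus $\ker\kappa=R$, and it remains only to prove $\kappa(x)=0$.

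Carrying the same reduction out on the genuine Gauss sums over $k_m$ produces an identity $\prod_i G_m(\eta_i\chi^{\aaa_i})^{\epsilon_i}=C_m\,\chi(\uuu)\prod_{(\psi,\bbb)\in S}G_m(\psi\chi^{\bbb})^{c_{\psi,\bbb}}$, where $S$ is a finite set of distinct atomic pairs, the $c_{\psi,\bbb}$ are the coordinates of the element $a$ above, $C_m$ is independent of $\chi$, and—crucially—the accumulated correction $\uuu$ lies in $(k^\times)^r$, because the only $\chi$-dependent factors created along the way are $\chi^{\aaa}(-1)$ and $\chi^{\aaa}(d^d)$, whose arguments are base-field elements. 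Suppose for contradiction that $a\neq 0$. For each fixed $\bbb$ the characters $\{\psi:(\psi,\bbb)\in S\}$ are distinct, hence their $\vvv$'s are distinct basis vectors of $V_k$ and \emph{a fortiori} linearly independent, so the hypothesis of Theorem \ref{main} holds for the atomic family and the tuple $\big(q^{-m/2}G_m(\psi\chi^{\bbb})\big)_{(\psi,\bbb)\in S}$ becomes equidistributed in $(S^1)^{|S|}$.

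Pushing forward under the continuous character $\mathbf z\mapsto\prod_{(\psi,\bbb)}z_{\psi,\bbb}^{N(q-1)c_{\psi,\bbb}}$, which is nontrivial since the exponents do not all vanish, the quantities $\prod_{(\psi,\bbb)}\big(q^{-m/2}G_m(\psi\chi^{\bbb})\big)^{N(q-1)c_{\psi,\bbb}}$ equidistribute on $S^1$, so their average over the relevant density-one index set tends to $\int_{S^1}z\,dz=0$. On the other hand, over $U_m$ the hypothesis reads $\prod_i G_m(\eta_i\chi^{\aaa_i})^{N\epsilon_i}=D_m\,\chi(\ttt)^{-N}$, and substituting the atomic identity gives $\prod_{(\psi,\bbb)}G_m(\psi\chi^{\bbb})^{Nc_{\psi,\bbb}}=(\text{const}_m)\,\chi(\ttt\uuu)^{-N}$; since $\ttt\uuu\in(k^\times)^r$ we have $\chi(\ttt\uuu)^{-N}\in\mu_{q-1}$, so raising to the $(q-1)$-th power turns the right-hand side into a genuine constant of modulus $1$ on $U_m$. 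As $U_m$ has density tending to $1$, the average of this nonvanishing constant cannot tend to $0$, a contradiction. Hence $a=0$, i.e.\ $\kappa(x)=0$, and by the previous paragraph $x\in R$, which is the asserted factorization. I expect the main obstacle to be the completeness statement $\ker\kappa=R$, bundled with the careful tracking—through the Frobenius, Hasse--Davenport and conjugation reductions—that every $\chi$-dependent correction is a twist by a base-field element; this is precisely what allows the single $(q-1)$-th power to annihilate the twist and reduce everything to a clean violation of equidistribution, once the density hypothesis on $U_m$ is made precise in Section \ref{sec3}.
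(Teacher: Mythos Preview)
Your approach—defining the content map $\kappa$, reducing once to atomic form, and invoking equidistribution at the end—is conceptually cleaner than the paper's, and the equidistribution step (including the $(q-1)$-th power trick to kill the twist $\chi(\ttt\uuu)$, which works precisely because $\ttt\uuu\in(k^\times)^r$) is correctly set up. However, there is a genuine gap in the target you hit. You work in the free group $F$ on symbols $[\psi,\aaa]$ with $\psi\in\Char_k$ arbitrary, and your relation group $R$ allows arbitrary $d$ in the Hasse--Davenport family. But the theorem, made precise in Section~\ref{sec3}, asks for $\mathbf x\in\HH$, where $\HH\subseteq\GGG$ is generated only by relations with $\eta$ a character of $k^\times$ and $d\mid q-1$. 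Your reduction to atomic form applies Hasse--Davenport with $d=\nu_i$, which need not divide $q-1$, and produces $\nu_i$-th roots of $\eta_i^{p^{-\pi_i}}$ that need not be defined over $k$; so you only establish $\mathbf x\in R$, and ``$\mathbf x\in R$'' is not ``exactly'' the required conclusion. Indeed, your own content map shows that $[\eta,\nu\bbb]$ with $\eta$ over $k$ and $\nu\nmid q-1$ has $\kappa$-image involving characters not over $k$, so the one-shot atomic reduction cannot be carried out inside $\GGG$ modulo $\HH$.

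The paper closes this gap by an induction on $\sum_i(\mu_i-1)$ that stays within $\HH$ throughout. At each step it uses the failure of the hypothesis of Theorem~\ref{main} to locate a linear dependence among the $\vvv_i$ with a common $\bbb$, picks the term with maximal $\mu_i$ appearing in it, and then argues a dichotomy: either $\eta_i$ has two distinct $d$-th roots defined over $k$ for some $d\mid\mu_i$—whence all $e$-th roots are over $k$ for a suitable $e>1$, so $e\mid q-1$ and a legitimate $\HH$-reduction decreases $\sum_i(\mu_i-1)$—or for every $d\mid\mu_i$ there is at most one $d$-th root over $k$, and then a direct character computation (playing a $\mu_i$-th root of $\eta_i$ against the other $\vvv_j$ in the dependence, which have strictly smaller $\mu_j$) forces a contradiction. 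This dichotomy is precisely the arithmetic content your global reduction bypasses; without it, or an independent proof that $R\cap\GGG=\HH$, your argument does not reach the conclusion as stated.
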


The results in this article can be seen as a case of the $\ell$-adic Mellin transform theory developed by Katz in \cite{katz2012conv} for the one-dimensional torus and generalized to higher-dimensional commutative algebraic groups by Forey, Fresán and Kowalski in \cite{forey-aftoffgvcae}. More precisely, let $G$ be any connected commutative algebraic group over $k$, and $\overline{\mathbf P}(G)$ the Tannakian perverse convolution category of $G_{\bar k}$ as defined in \cite[Chapter 3]{forey-aftoffgvcae}, modulo isomorphism. The subcategory $\overline{\mathbf P}_1(G)$ of $\overline{\mathbf P}(G)$ consisting of tannakian rank 1 objects is an abelian group under convolution. For objects $\mathcal L_1,\ldots,\mathcal L_n$ in $\overline{\mathbf P}_1(G)$, let $\mathcal M=\mathcal L_1\oplus\cdots\oplus\mathcal L_n$. The (arithmetic or geometric) tannakian group $H_i$ of each $\mathcal L_i$ is a closed subgroup of $\GL(1)$ (so either $\GL(1)$ itself or the group of $r_i$-th roots of unity for some $r_i\geq 1$), so the tannakian group $H$ of $\mathcal M$ (that is, the group whose category of representations is the subcategory $\langle\mathcal M\rangle$ of $\overline{\mathbf P}(G)$ tensor-generated by $\mathcal M$) can be viewed as a closed subgroup of $\GL(1)^n$ which is mapped onto $H_i$ via the $i$-th projection $\rho_i:\GL(1)^n\to\GL(1)$ for every $i=1,\ldots,n$. Such subgroup is completely determined by the subgroup $X$ of the group of characters of $\GL(1)^n$ (which can be naturally identified with $\ZZ^n$ where $\mathbf a\in\ZZ^n$ corresponds to the character $\rho_{\mathbf a}(\mathbf t):={\mathbf t}^{\mathbf a}=t_1^{a_1}\cdots t_n^{a_n}$) consisting of the characters that act trivially on $H$, more precisely, $H=\bigcap_{\rho\in X}\mathrm{ker}(\rho)$ (see eg. \cite[3.2.10]{springer}). Now the object of the category $\langle\mathcal M\rangle$ corresponding to (the restriction to $H$ of) the character $\rho_{\mathbf a}=\rho_1^{a_1}\otimes\cdots\otimes\rho_n^{a_n}$ is $\LL_1^{\ast a_1}\ast\cdots\ast\LL_n^{\ast a_n}$ (since convolution ``is'' the tensor operation in the category $\overline{\mathbf P}(G)$). So this restriction is trivial if and only if $\LL_1^{\ast a_1}\ast\cdots\ast\LL_n^{\ast a_n}$ is trivial in $\overline{\mathbf P}(G)$, and we get the following result, which was kindly suggested to us by an anonymous referee and gives the proper high-level picture of the results in this article:

\begin{proposition}
 The Tannakian group of $\mathcal M$ is the subgroup $H$ of $\GL(1)^n$ consisting of the $\mathbf x=(x_1,\ldots,x_n)$ such that $x_1^{a_1}\cdots x_n^{a_n}=1$ for all $\mathbf a\in\mathbb Z^n$ with $\LL_1^{\ast a_1}\ast\cdots\ast\LL_n^{\ast a_n}$ trivial.
\end{proposition}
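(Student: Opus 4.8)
The plan is to read the statement directly off the Tannakian dictionary, most of which has already been assembled in the discussion preceding the proposition. First I would invoke the construction of \cite{forey-aftoffgvcae}: the subcategory $\langle\mathcal M\rangle$ tensor-generated by $\mathcal M$ is Tannakian, and the chosen fiber functor identifies it with $\Rep(H)$, the inclusion $H\hookrightarrow\GL(1)^n$ being exactly the one induced by the decomposition $\mathcal M=\LL_1\oplus\cdots\oplus\LL_n$ together with the projections $\rho_i$. Since $H$ is by construction a closed subgroup of the torus $\GL(1)^n$, it is diagonalizable and is therefore recovered from its character lattice: writing $X=\{\mathbf a\in\ZZ^n : \rho_{\mathbf a}|_H=\mathbf 1\}$ for the subgroup of characters of $\GL(1)^n$ that restrict trivially to $H$, one has $H=\bigcap_{\mathbf a\in X}\ker(\rho_{\mathbf a})$, this being the duality between diagonalizable groups and their character lattices recalled in \cite[3.2.10]{springer}.

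The only remaining task is to identify $X$ explicitly, and this is precisely where the dictionary between convolution and tensor product enters. Under the equivalence $\langle\mathcal M\rangle\simeq\Rep(H)$ the one-dimensional representation $\rho_{\mathbf a}=\rho_1^{a_1}\otimes\cdots\otimes\rho_n^{a_n}$ of $H$ corresponds to the object $\LL_1^{\ast a_1}\ast\cdots\ast\LL_n^{\ast a_n}$, because convolution is the tensor operation of $\overline{\mathbf P}(G)$ and the fiber functor is monoidal. A monoidal equivalence carries the convolution unit to the tensor unit of $\Rep(H)$, namely the trivial representation, and being an equivalence it reflects isomorphisms; hence an object of $\langle\mathcal M\rangle$ is isomorphic to the unit object exactly when the corresponding representation is trivial. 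Applying this to $\rho_{\mathbf a}$ gives $\rho_{\mathbf a}|_H=\mathbf 1$ if and only if $\LL_1^{\ast a_1}\ast\cdots\ast\LL_n^{\ast a_n}$ is trivial in $\overline{\mathbf P}(G)$, so that
$$
X=\{\mathbf a\in\ZZ^n : \LL_1^{\ast a_1}\ast\cdots\ast\LL_n^{\ast a_n}\text{ is trivial}\}.
$$

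Substituting this description of $X$ into $H=\bigcap_{\mathbf a\in X}\ker(\rho_{\mathbf a})$ and unwinding $\ker(\rho_{\mathbf a})=\{\mathbf x : x_1^{a_1}\cdots x_n^{a_n}=1\}$ yields exactly the claimed description of $H$, which also makes transparent that $H$ depends only on the convolution classes of the $\LL_i$, since the whole argument refers solely to triviality of convolution powers.

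I do not expect a genuine obstacle here: the proposition is essentially a bookkeeping consequence of the formalism already recalled. The single point deserving care is making the equivalence ``object trivial $\iff$ restricted character trivial'' precise, which reduces to checking that the chosen fiber functor is a monoidal equivalence sending the convolution unit to the trivial representation and reflecting isomorphisms; all of this is part of the construction of $\overline{\mathbf P}(G)$ and its Tannakian group in \cite{forey-aftoffgvcae}, and the only mild subtlety is to confirm that the normalizations there make $\langle\mathcal M\rangle$ genuinely neutral Tannakian so that the group $H$ and its character-lattice duality are available in the form used above.
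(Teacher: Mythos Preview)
Your proposal is correct and follows essentially the same argument as the paper: both use the character-lattice duality for diagonalizable groups (citing \cite[3.2.10]{springer}) to write $H=\bigcap_{\mathbf a\in X}\ker(\rho_{\mathbf a})$, then identify $X$ by observing that under the Tannakian equivalence the character $\rho_{\mathbf a}$ corresponds to $\LL_1^{\ast a_1}\ast\cdots\ast\LL_n^{\ast a_n}$, so that $\rho_{\mathbf a}|_H$ is trivial iff this convolution is trivial. Your write-up is slightly more explicit about why the monoidal equivalence yields the ``trivial $\iff$ trivial'' step, but the substance is identical to the paragraph preceding the proposition in the paper.
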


In other words, the tannakian group of $\mathcal M$ reflects the multiplicative relations among the $\mathcal L_i$ (including a particular $\mathcal L_i$ being of finite order). In particular:

\begin{corollary}
 The (arithmetic or geometric) tannakian group of $\mathcal M$ is the entire $\GL(1)^n$ if and only if $\LL_1^{\ast a_1}\ast\cdots\ast\LL_n^{\ast a_n}$ is (arithmetically or geometrically, repectively) non-trivial for every non-zero $\mathbf a\in\mathbb Z^n$.
\end{corollary}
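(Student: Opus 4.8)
The plan is to deduce this directly from the preceding Proposition, which identifies the Tannakian group $H$ of $\mathcal M$ with the common kernel
$$
H=\bigcap_{\mathbf a\in X}\ker(\rho_{\mathbf a}),\qquad X=\{\mathbf a\in\ZZ^n : \LL_1^{\ast a_1}\ast\cdots\ast\LL_n^{\ast a_n}\text{ is trivial}\},
$$
where $\rho_{\mathbf a}(\mathbf x)=x_1^{a_1}\cdots x_n^{a_n}$ is the character of $\GL(1)^n$ attached to $\mathbf a$. The whole statement then reduces to the elementary observation that $H$ fills out all of $\GL(1)^n$ precisely when $X$ contains no non-zero relation. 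The only group-theoretic input I would use is the standard dictionary (already invoked before the Proposition) that the character lattice of $\GL(1)^n$ is $\ZZ^n$ via the isomorphism $\mathbf a\mapsto\rho_{\mathbf a}$; in particular $\rho_{\mathbf a}$ is the trivial character of $\GL(1)^n$ if and only if $\mathbf a=0$. I would also record that $\mathbf 0\in X$ always, since $\LL_1^{\ast 0}\ast\cdots\ast\LL_n^{\ast 0}$ is the unit object for convolution.

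For the forward direction, suppose $H=\GL(1)^n$. Then every $\rho_{\mathbf a}$ with $\mathbf a\in X$ vanishes identically on $H=\GL(1)^n$, so each such $\rho_{\mathbf a}$ is the trivial character, forcing $\mathbf a=0$; hence $X=\{0\}$ and $\LL_1^{\ast a_1}\ast\cdots\ast\LL_n^{\ast a_n}$ is non-trivial for every non-zero $\mathbf a$. Conversely, if $\LL_1^{\ast a_1}\ast\cdots\ast\LL_n^{\ast a_n}$ is non-trivial for all non-zero $\mathbf a$, then $X=\{0\}$, and therefore $H=\ker(\rho_{\mathbf 0})=\GL(1)^n$. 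The arithmetic and geometric versions are proved word for word the same way, using the arithmetic or geometric Tannakian group and the corresponding (arithmetic or geometric) notion of triviality of the convolution throughout.

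Since the corollary is a purely formal consequence of the Proposition, I do not expect any genuine obstacle: all the substance lies in the identification of $H$ with the intersection of kernels, which is granted by the Proposition, and in the correspondence between closed subgroups of the torus $\GL(1)^n$ and sublattices of its character group. The only point requiring any care is the bookkeeping of keeping the arithmetic and geometric cases parallel, which amounts to fixing once and for all which Tannakian group (and hence which triviality) is meant and carrying that choice consistently through both implications.
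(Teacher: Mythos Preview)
Your proof is correct and is exactly the intended argument: the paper states the corollary with the words ``In particular'' and gives no separate proof, since it is the immediate specialization of the Proposition to the case $X=\{0\}$ that you spell out.
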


This is precisely what we prove in this article for the geometric tannakian group of the direct sum of the objects $[\mathbf t_i]^\ast \alpha_{i\ast}(\LL_{\psi}\otimes\LL_{\eta_i})$ for $i=1,\ldots,n$ (where $\alpha_i:\Gm\to\Gm^r$ is the map $t\mapsto t^{\mathbf a_i}$) under the hypothesis of Theorem \ref{main}. We will not make use of this general theory here, and instead follow a higher dimensional analogue of Katz' approach in \cite{katz1988gauss}, replacing the (implicit, as they were formally defined in a later work) use of hypergeometric sheaves by the higher dimensional hypergeometric objects and the explicit description of the abelian group $\overline{\mathbf P}_1(\Gm^r)$ given by Gabber and Loeser in \cite{gabber1996faisceaux}.

In section \ref{review} we review the main results of this theory that we will make use of. In section \ref{proof} we give the proof of Theorem \ref{main}, and in section \ref{sec3} we use it to deduce Theorem \ref{main2}. Finally, in the last section we prove a version on the main theorem in which the fields are allowed to have different characteristics.

We will choose a prime $\ell\neq p$ and work with $\ell$-adic cohomology, and will assume a choice of embedding $\iota:\Ql\to\CC$ that we will use to identify elements of $\Ql$ and $\CC$ without making any further mention to it. When speaking about purity of $\ell$-adic objects, we will mean it with respect to the chosen embedding $\iota$.

The author would like to thank the anonymous referees for their valuable comments on earlier versions of the article.

\section{Hypergeometric perverse sheaves on the torus}\label{review}

The main reference for this section is \cite{gabber1996faisceaux}. Let $\Gmk^r$ be the $r$-dimensional split torus over $k$, and $\Gmkk^r$ its extension of scalars to $\bar k$. Denote by $\Dbc(\Gmk^r,\Ql)$ the derived category of $\ell$-adic sheaves on $\Gmk^r$. We have a (!-)convolution operation $\Dbc(\Gmk^r,\Ql)\times\Dbc(\Gmk^r,\Ql)\to\Dbc(\Gmk^r,\Ql)$ given by
$$
\KK\ast\LL(:=\KK\ast_!\LL)=\R\mu_!(\pi_1^\ast\KK\otimes\pi_2^\ast\LL)
$$
where $\pi_1,\pi_2,\mu:\Gmk^r\times\Gmk^r\to\Gmk^r$ are the projections and the multiplication map respectively.

Let $Perv$ denote the subcategory of $\Dbc(\Gmk^r,\Ql)$ consisting of the perverse objects. Any object $\KK\in Perv$ has Euler characteristic $\chi(\KK)\geq 0$ \cite[Corollaire 3.4.4]{gabber1996faisceaux}, and $\KK$ is said to be \emph{negligible} if $\chi(\KK)=0$. The negligible objects form a thick subcategory  $Perv_0$ of $Perv$; denote the quotient category by $\overline{Perv}$. They also form an ideal for the convolution, and if $\KK$ and $\LL$ are perverse, the $i$-th perverse cohomology objects of $\KK\ast\LL$ are negligible for $i\neq 0$ \cite[Proposition 3.6.4]{gabber1996faisceaux}, so the convolution gives a well defined operation $\overline{Perv}\times\overline{Perv}\to\overline{Perv}$. With this operation, $\overline{Perv}$ becomes a Tannakian category, in which the ``dimension'' of an object is its Euler characteristic.

Let $\psi:k\to\CC^\times$ be the additive character defined in the introduction, and $\chi:k^\times\to\CC^\times$ any multiplicative character. Let $\LL_\psi$ and $\LL_\chi$ be the corresponding (restriction of) Artin-Schreier and Kummer sheaves on $\Gmk$ \cite[1.7]{deligne569application} and $\HH(\psi,\chi):=(\LL_\psi\otimes\LL_\chi)[1]$. For every embedding of tori $i:\Gmk\to\Gmk^r$, the object $i_\ast\HH(\psi,\chi)=i_\ast(\LL_\psi\otimes\LL_\chi)[1]\in\Dbc(\Gmk^r,\Ql)$ is perverse with Euler characteristic $1$, and so is $\delta_{\ttt}\ast i_\ast\HH(\psi,\chi)=\mathrm{trans}_{\ttt\ast}i_\ast\HH(\psi,\chi)$ for any $\ttt\in k^\times$, where $\mathrm{trans}_{\ttt}:\Gmk^r\to\Gmk^r$ is the translation map $\mathbf x\mapsto \ttt\mathbf x$ and $\delta_\ttt$ is the punctual object $\Ql$ supported on $\ttt$ \cite[Proposition 8.1.3]{gabber1996faisceaux}.

By multiplicativity, any convolution of objects of this form is perverse with Euler characteristic 1. Such objects are called \emph{hypergeometric} (not all hypergeometric objects on $\Gmkk^r$ are of this form though, as they may arise from characters which are not defined over $k$).

The isomorphism classes of hypergeometric objects of $\overline{Perv}$ on $\Gmkk^r$ form an abelian group under convolution \cite[Corollaire 8.1.6]{gabber1996faisceaux}. Let $\mathcal S$ be the set of one-dimensional subtori of $\Gmkk^r$. We will identify it with the set of primitive $r$-tuples $\bbb\in\ZZ^r$, the $r$-tuple $\bbb$ corresponding to the image of the embedding $i_\bbb:\Gmkk\to\Gmkk^r$ given by $t\mapsto t^\bbb$. Let also $\mathcal C(\Gmkk)$ denote the set of continuous $\ell$-adic characters of the tame fundamental group of $\Gmkk$. The subset of $\mathcal C(\Gmkk)$ consisting of finite order characters can be identified with the set $\Char_k$ via the chosen embedding $\iota:\Ql\to\CC$. Then by \cite[Th\'eor\`eme 8.6.1]{gabber1996faisceaux} there is an isomorphism $\Psi=(\Psi_1,\Psi_2)$ between the group of isomorphism classes of hypergeometric objects on $\Gmkk^r$ and the product $ (\bar k^\times)^r\times\ZZ^{(\mathcal S\times\mathcal C(\Gmkk))}$, such that $
\Psi_1(\delta_{\ttt}\ast i_{\bbb\ast}\HH(\psi,\chi))=\ttt$
and $\Psi_2(\delta_{\ttt}\ast i_{\bbb\ast}\HH(\psi,\chi))=1\cdot(\bbb,\chi)$.

We will need the following lemma:
\begin{lemma}\label{lemma}
 Let $\KK$ be a hypergeometric object in $\Gmkk^r$ such that $\Psi_2(\KK)\neq 0$. Then $\HH^0(\KK)=0$.
\end{lemma}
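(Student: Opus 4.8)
The plan is to reformulate the vanishing of $\HH^0(\KK)$ as a statement about punctual \emph{quotients} of $\KK$, and then to rule these out by a soft Euler characteristic argument, never unwinding the convolution structure of $\KK$ explicitly. First I would record that, since $\KK$ is perverse on the $r$-dimensional variety $\Gmkk^r$, the support condition of the perverse $t$-structure forces $\HH^i(\KK)=0$ for $i>0$ and $\dim\mathrm{supp}\,\HH^0(\KK)\le 0$; thus $\HH^0(\KK)$ is a skyscraper sheaf. The crucial step is to promote this skyscraper to a genuine perverse quotient of $\KK$: since $\HH^{\ge 1}(\KK)=0$, the canonical truncation triangle reads $\tau_{\le -1}\KK\to\KK\to\HH^0(\KK)[0]\xrightarrow{+1}$, its third term lies in the heart, and because $\tau_{\le -1}\KK\in{}^pD^{\le 0}$ the long exact sequence of perverse cohomology collapses to a short exact sequence $0\to{}^p\HH^0(\tau_{\le -1}\KK)\to\KK\to\HH^0(\KK)[0]\to 0$ in $Perv$. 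Writing $P:=\HH^0(\KK)[0]$ and $\KK':={}^p\HH^0(\tau_{\le -1}\KK)$, I obtain $0\to\KK'\to\KK\to P\to 0$ with $P$ punctual.

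Next I would run the Euler characteristic argument. Every perverse object on the torus has $\chi\ge 0$ (Corollaire 3.4.4 of \cite{gabber1996faisceaux}), and $\chi$ is additive in short exact sequences, while $\chi(\KK)=1$ because $\KK$ is hypergeometric. Hence $\chi(P)=1-\chi(\KK')\le 1$; on the other hand a nonzero skyscraper has $\chi(P)=\dim_{\Ql}\HH^0(\KK)\ge 1$. So if $\HH^0(\KK)\neq 0$ we are forced into $\chi(P)=1$ and $\chi(\KK')=0$, meaning $P\cong\delta_\ttt$ for a single point $\ttt\in\Gmkk^r$ and $\KK'$ negligible. Passing to the quotient category $\overline{Perv}$, the negligible kernel dies and the sequence yields an isomorphism $\KK\cong\delta_\ttt$ in $\overline{Perv}$. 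But $\delta_\ttt$ is the punctual hypergeometric object with $\Psi_2(\delta_\ttt)=0$, so $\Psi_2(\KK)=\Psi_2(\delta_\ttt)=0$, contradicting the hypothesis $\Psi_2(\KK)\neq 0$. Therefore $\HH^0(\KK)=0$.

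The only non-formal point, and the step I expect to require the most care, is the claim that the top ordinary cohomology sheaf of a perverse sheaf is a perverse quotient: everything afterwards is bookkeeping with Euler characteristics together with the single input that the trivial hypergeometric objects $\delta_\ttt$ have $\Psi_2=0$. I would therefore isolate and state explicitly the comparison $0\to{}^p\HH^0(\tau_{\le -1}\KK)\to\KK\to\HH^0(\KK)[0]\to 0$, obtained from the long exact sequence of perverse cohomology applied to the ordinary truncation triangle, as a standard but slightly delicate fact about the interaction of the two $t$-structures, before invoking it. I find it worth emphasizing that the argument uses nothing about $\KK$ beyond its being perverse of Euler characteristic $1$ with $\Psi_2(\KK)\neq 0$, so in particular it never requires an explicit description of $\KK$ as a convolution of the elementary objects $\delta_\ttt\ast i_{\bbb\ast}\HH(\psi,\chi)$.
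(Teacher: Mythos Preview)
Your argument is correct and takes a somewhat different route from the paper's. The paper decomposes $\KK$ into its simple Jordan--H\"older constituents, invokes the BBD classification of simple perverse sheaves as intermediate extensions $j_{!*}(\FF[d])$, and argues that every negligible constituent has $d>0$ (hence vanishing $\HH^0$) while the unique non-negligible one would have to be a $\delta_\ttt$ if $\HH^0\neq 0$. You bypass the simple decomposition entirely: the truncation triangle exhibits $\HH^0(\KK)[0]$ directly as a perverse quotient of $\KK$, and the Euler characteristic count forces it to be a single $\delta_\ttt$ with negligible kernel, so $\KK\cong\delta_\ttt$ in $\overline{Perv}$. Your approach is lighter in that it avoids \cite[4.3.1]{beilinson1982faisceaux} and the fact that simple negligible objects have positive-dimensional support, using only the support condition for ${}^pD^{\le 0}$ and additivity of $\chi$; the paper's approach, on the other hand, gives slightly more structural information (all simple constituents of $\KK$ have $\HH^0=0$ individually). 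Both routes converge on the same endgame: $\Psi_2(\delta_\ttt)=0$ contradicts the hypothesis. The one point you rightly flag---that $\tau_{\le -1}\KK\in{}^pD^{\le 0}$---is immediate from the support condition since $\tau_{\le -1}\KK$ inherits $\HH^i(\KK)$ for $i\le -1$ and has nothing in degrees $\ge 0$.
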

\begin{proof}
 Perverse objects have finite length, so by the long exact sequence of cohomology sheaves associated to an exact sequence of perverse objects it suffices to show this for the simple components of $\KK$. Since $\chi(\KK)=1$ and the Euler characteristic is additive, all but one of the simple components of $\KK$ are negligible, and the other one is a simple hypergeometric object $\KK_0$ such that $\Psi_2(\KK_0)=\Psi_2(\KK)$.
 
 All simple perverse objects are of the form $j_{!\ast}(\FF[d])$ for some $0\leq d\leq r$, where $j:V\to\Gmkk^r$ is the inclusion of an irreducible smooth subvariety of dimension $d$ and $\FF$ is an irreducible lisse sheaf on $V$ \cite[Th\'eor\`eme 4.3.1]{beilinson1982faisceaux}. If $d>0$, then $\HH^0(j_{!\ast}(\FF[d]))=0$ by \cite[Corollaire 1.4.24]{beilinson1982faisceaux}. All negligible simple objects must have $d>0$, since otherwise they would be punctual objects, which have Euler characteristic $\geq 1$.
 
 It remains to show that $\HH^0(\KK_0)=0$. Otherwise, $\KK_0$ would have $d=0$, that is, it would be a punctual object $\delta_\ttt$ for some $\ttt\in\bar k$, so $\Psi(\KK_0)=(\ttt,0)$ and, in particular, $\Psi_2(\KK_0)=\Psi_2(\KK)=0$.
 \end{proof}

\section{Proof of Theorem \ref{main}}\label{proof}

This section is devoted to the proof of Theorem \ref{main}. Recall that we have fixed a finite field $k=\Fq$, $n$ non-zero $r$-tuples $\aaa_1,\ldots,\aaa_n\in\ZZ^r$, $n$ multiplicative characters $\eta_1,\ldots,\eta_n:k^\times\to\CC^\times$ and $n$ elements $\ttt_1,\ldots,\ttt_n\in (k^\times)^r$. For every $\chi\in S_m$ the element $\Phi_m(\chi)\in (S^1)^n$ is given by
$$
\Phi_m(\chi)=(q^{-m/2}\chi(\ttt_1)G_m(\eta_1\chi^{\aaa_1}),\ldots,q^{-m/2}\chi(\ttt_n)G_m(\eta_n\chi^{\aaa_n})).
$$
Since $q^{-m/2}\chi(\ttt_i)G_m(\bar\eta_i\chi^{-\aaa_i})=\eta_i(-1)\overline{q^{-m/2}\chi((-1)^{|\aaa|}\ttt_i^ {-1})G_m(\eta_i\chi^{\aaa_i})}$ for every $\chi\in S_m$, we may assume without loss of generality that $\mu_i>0$ for every $i=1,\ldots,n$.

In order to prove the equidistribution of $\Phi_m(\chi)$ as $m\to\infty$ we need to show that, for every continuous function $f:(S^1)^n\to\CC$, we have
$$
\lim_{m\to\infty}|S_m|^{-1}\sum_{\chi\in S_m}f(\Phi_m(\chi))=\int_{(S^1)^n}f d\mu,
$$
where $\mu$ is the Haar measure on $(S^1)^n$. Since $(S^1)^n$ is abelian, every such $f$ is a class function, so it suffices to show this for the traces of irreducible representations of $(S^1)^n$, which are dense in the space of class functions by the Peter-Weyl theorem. These irreducible representations are just the characters
$$
\Lambda_{\ccc}:\ttt=(t_1,\ldots,t_n)\mapsto\ttt^{\ccc}=t_1^{c_1}\cdots t_n^{c_n}
$$
for some $n$-tuple $\ccc:=(c_1,\ldots,c_n)\in\ZZ^n$. Let $\Sigma_m(\Lambda_\ccc)=|S_m|^{-1}\sum_{\chi\in S_m}\Lambda_\ccc(\Phi_m(\chi))$. If $\Lambda_\ccc$ is trivial (that is, if $\ccc=\mathbf 0$), then $\Sigma_m(\Lambda_{\ccc})=1=\int_{(S^1)^n}d\mu$, so let us assume that $\ccc\neq\mathbf 0$. Then, since $\int_{(S^1)^n}\Lambda_c d\mu=0$, we need to show that $\lim_{m\to\infty}\Sigma_m(\Lambda_\ccc)=0$. And this is clearly a consequence of the following

\begin{proposition}\label{previa}
Let $a=\sum_i\min_{j:a_{ij}\neq 0}|a_{ij}|$. There exists a constant $A(\ccc)$ such that, for every $m>\log_q(1+a)$,
$$
|\Sigma_m(\Lambda_\ccc)|\leq\frac{A(\ccc)(q^m-1)^rq^{-m/2}+a(q^m-1)^{r-1}}{(q^m-1)^{r-1}(q^m-1-a)}.
$$
\end{proposition}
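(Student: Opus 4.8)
The plan is to estimate $\Sigma_m(\Lambda_\ccc)$ by interpreting the sum over characters $\chi\in S_m$ as a cohomological trace, following Katz's strategy of realizing Gauss-sum sums as Frobenius traces on the stalks of a suitable hypergeometric perverse sheaf. Concretely, I would first expand $\Lambda_\ccc(\Phi_m(\chi)) = \prod_{i=1}^n \big(q^{-m/2}\chi(\ttt_i)G_m(\eta_i\chi^{\aaa_i})\big)^{c_i}$. Using the conjugation relation (\ref{elem1}) and the Hasse--Davenport relation $G_m(\chi)=G(\chi)^m$, I can reduce to nonnegative exponents and rewrite each factor so that the total expression becomes, up to the normalizing power of $q^{-m/2}$ and a $\chi$-dependent monomial $\chi(\ttt^{\ccc})$, a product of Gauss sums $G_m(\eta_i\chi^{\aaa_i})^{|c_i|}$. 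The key point is that $\sum_{\chi\in S_m}$ of such a product is (by the Lefschetz trace formula / Grothendieck--Lefschetz, applied over $\Gmk^r$ whose $k_m$-points parametrize the $\chi$) the Frobenius trace on the cohomology of a tensor product of sheaves whose underlying object is a convolution of the hypergeometric objects $\delta_{\ttt_i}\ast i_{\bbb_i\ast}\HH(\psi,\eta_i)$ twisted appropriately.

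\textbf{Setting up the cohomological sum.} The governing object is $\KK := \mathrm{trans}_{\ttt^\ccc}{}_\ast\big(\bigstar_{i}\,(\delta_{\ttt_i}\ast i_{\bbb_i\ast}\HH(\psi,\eta_i))^{\ast\,\mu_i\text{-scaled}\,|c_i|}\big)$, a hypergeometric object on $\Gmkk^r$ whose $\Psi_2$-invariant I compute explicitly from the additivity of $\Psi_2$ under convolution and the formula $\Psi_2(\delta_\ttt\ast i_{\bbb\ast}\HH(\psi,\chi))=1\cdot(\bbb,\chi)$. The essential combinatorial input is that the linear-independence hypothesis of Theorem~\ref{main}, grouped by primitive direction $\bbb$, forces $\Psi_2(\KK)\neq 0$ whenever $\ccc\neq\mathbf 0$: the contributions $(\bbb_i,\xi)$ coming from the characters $\xi$ with $\xi^{\mu_i}=\eta_i$ (encoded by $\vvv_i$) cannot cancel, precisely because the relevant $\vvv_i$ are linearly independent in $V_k$. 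With $\Psi_2(\KK)\neq 0$, Lemma~\ref{lemma} gives $\HHH^0(\KK)=0$, so the Frobenius eigenvalues contributing to $\Sigma_m(\Lambda_\ccc)$ all come from cohomology in degrees $\neq 0$, where weight (purity) bounds apply.

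\textbf{The weight estimate and the counting correction.} Granting $\HHH^0=0$, the Grothendieck--Lefschetz trace of Frobenius over $\Gmk^r$ is a sum over the other perverse cohomology degrees of traces on spaces that are mixed of weight strictly less than the middle weight; combined with Deligne's bound this yields a gain of $q^{-m/2}$ per sum, producing the main term $A(\ccc)(q^m-1)^r q^{-m/2}$ with $A(\ccc)$ the (Frobenius-independent) total dimension of the relevant cohomology. I would take $A(\ccc)$ to be a bound on $\sum_i \dim \HHH^i_c$, finite and depending only on $\ccc$ through the generic rank and the number of singularities of the hypergeometric sheaf. Finally, the trace formula naturally sums over \emph{all} $\chi\in T_m$, i.e. over all $(q^m-1)^r$ characters, whereas $\Sigma_m$ sums only over $S_m$; the discrepancy comes from the $\chi$ with some $\eta_i\chi^{\aaa_i}=\mathbf 1$. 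The number of such excluded $\chi$ is at most $a\,(q^m-1)^{r-1}$ — each condition $\eta_i\chi^{\aaa_i}=\mathbf 1$ cuts out a coset-union whose size I bound via $\min_{j:a_{ij}\neq 0}|a_{ij}|$, summed over $i$ to give the constant $a$ — and each excluded term has absolute value $1$ after normalization. Dividing by $|S_m|\geq (q^m-1)^{r-1}(q^m-1-a)$ (valid once $q^m-1>a$, i.e. $m>\log_q(1+a)$) assembles the two contributions into the stated fraction.

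\textbf{The main obstacle.} The crux is the verification that the linear-independence condition translates into $\Psi_2(\KK)\neq 0$. This is where the hypothesis is used in an essential rather than formal way: one must show that after expressing $\KK$ as a convolution and decomposing each $\eta_i$-twisted monomial factor into its $\mu_i$-th roots (the characters $\xi$ summed in $\vvv_i$), grouping by primitive direction $\bbb$, no nonzero integer combination $\ccc$ can make the total class $\sum_i c_i \vvv_i$ (read inside $V_k$, separately in each $\bbb$-graded piece) vanish. Handling the $p$-part carefully — the distinction between $\mu_i$ and its prime-to-$p$ part $\nu_i$, via relation (\ref{elem2}) which identifies $G(\chi^p)$ with $G(\chi)$ — and confirming that this accounting exactly matches the $\Psi_2$-invariant is the delicate bookkeeping I expect to occupy the bulk of the argument; the weight and counting estimates are comparatively routine once $\HHH^0(\KK)=0$ is in hand.
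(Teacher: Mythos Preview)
Your outline matches the paper's strategy closely: split off the bad characters, realize the full sum over $T_m$ as the Frobenius trace of a hypergeometric convolution $\KK$, use the linear-independence hypothesis to force $\Psi_2(\KK)\neq 0$, invoke Lemma~\ref{lemma} to kill $\HH^0(\KK)$, and then apply Deligne's weight bound. The counting of $|T_m\setminus S_m|$ and the final assembly are exactly as in the paper.

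There is, however, one genuine imprecision in your mechanism that would cause trouble if carried out literally. The sum $\sum_{\chi\in T_m}\Lambda_\ccc(\Phi_m(\chi))$ is \emph{not} obtained as a Grothendieck--Lefschetz trace over $\Gmk^r$ ``whose $k_m$-points parametrize the $\chi$,'' and $A(\ccc)$ is not $\sum_i\dim\HHH^i_c$. What actually happens is more elementary: one expands every Gauss sum as $-\sum_{x\in k_m^\times}\psi(x)\eta\chi^{\aaa}(x)$, interchanges the order of summation, and then the sum over $\chi\in T_m$ of a character value is $(q^m-1)^r$ times an indicator function. This collapses the expression to a finite exponential sum over a subvariety of $\Gm^{||\ccc||}$, which one recognizes as $(q^m-1)^r q^{-m|\ccc|/2}$ times the trace of Frobenius on the \emph{stalk} $\KK_{\bar{\mathbf 1}}$ at the single point $\mathbf 1\in\Gm^r$. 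The constant in the paper is $A_\KK=\sum_i\dim\HH^i(\KK)_{\bar{\mathbf 1}}$, a sum of stalk dimensions, and the weight saving comes from $\HH^i(\KK)$ being mixed of weight $\leq|\ccc|+i\leq|\ccc|-1$ for $i\leq -1$ once $\HH^0(\KK)=0$. If you try to run the argument with global $\HHH^i_c$ you will not get the right normalization (the factor $(q^m-1)^r$ would be missing) and the vanishing input from Lemma~\ref{lemma}, which is about $\HH^0$ of the sheaf, would not directly control the top-degree compactly supported cohomology.
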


\begin{proof}
For the sake of notation simplicity, let us assume $m=1$ and denote $\Sigma_1,S_1,T_1$ and $\Phi_1$ by $\Sigma,S,T$ and $\Phi$ respectively. Write $\epsilon_i=c_i/|c_i|$ for every $i$ such that $c_i\neq 0$, $|\ccc|=\sum_i c_i$ and $||\ccc||=\sum_i |c_i|=\sum_i \epsilon_i c_i$. Since $|G(\eta_i\chi^{\aaa_i})|=\sqrt{q}$ for every $\chi\in S$, we have 

$$
|S|\cdot\Sigma(\Lambda_\ccc)=\sum_{\chi\in S}\Lambda_\ccc(\Phi(\chi))=q^{-|\ccc|/2}\sum_{\chi\in S} (\chi(\ttt_1)G(\eta_1\chi^{\aaa_1}))^{c_1}\cdots (\chi(\ttt_n)G(\eta_n\chi^{\aaa_n}))^{c_n}=
$$
$$
=q^{-||\ccc||/2}\sum_{\chi\in S} \prod_{i:c_i>0}(\chi(\ttt_i)G(\eta_i\chi^{\aaa_i}))^{c_i}\prod_{i:c_i<0}\overline{(\chi(\ttt_i)G(\eta_i\chi^{\aaa_i}))}^{-c_i}.
$$
We split this as 
\begin{equation}\label{split}
|S|\cdot\Sigma(\Lambda_\ccc)=\Sigma_1-\Sigma_2,
\end{equation} where
$$
\Sigma_1=q^{-||\ccc||/2}\sum_{\chi\in T} \prod_{i:c_i>0}(\chi(\ttt_i)G(\eta_i\chi^{\aaa_i}))^{c_i}\prod_{i:c_i<0}\overline{(\chi(\ttt_i)G(\eta_i\chi^{\aaa_i}))}^{-c_i}
$$
and
$$
\Sigma_2=q^{-||\ccc||/2}\sum_{\chi\in T\backslash S} \prod_{i:c_i>0}(\chi(\ttt_i)G(\eta_i\chi^{\aaa_i}))^{c_i}\prod_{i:c_i<0}\overline{(\chi(\ttt_i)G(\eta_i\chi^{\aaa_i}))}^{-c_i}.
$$
We will start by evaluating the first sum:
$$
\Sigma_1=(-1)^{||\ccc||}q^{-||\ccc||/2}\sum_{\chi\in T}\prod_{i:c_i>0}\left(\chi(\ttt_i)\sum_{x\in k^\times}\psi(x)\eta_i\chi^{\aaa_i}(x)\right)^{c_i}\prod_{i:c_i<0}\left(\overline\chi(\ttt_i)\sum_{x\in k^\times}\overline\psi(x)\overline\eta_i\overline\chi^{\aaa_i}(x)\right)^{-c_i}=
$$
$$
=(-1)^{||\ccc||}q^{-||\ccc||/2}\sum_{\chi\in T}\prod_{i=1}^n\left(\chi^{\epsilon_i}(\ttt_i)\sum_{x\in k^\times}\psi^{\epsilon_i}(x)\eta_i^{\epsilon_i}\chi^{\epsilon_i\aaa_i}(x)\right)^{|c_i|}=
$$
$$
=(-1)^{||\ccc||}q^{-||\ccc||/2}\sum_{x_{ij}\in k^\times,1\leq i\leq n,1\leq j\leq |c_i|}\psi(\sum_{i,j}\epsilon_i x_{ij})\sum_{\chi\in T}\prod_{i=1}^n\chi^{c_i}(\ttt_i)\eta_i^{\epsilon_i}\chi^{\epsilon_i\aaa_i}(\prod_{j=1}^{|c_i|}x_{ij})=
$$
$$
=(-1)^{||\ccc||}q^{-||\ccc||/2}\sum_{x_{ij}\in k^\times}\psi(\sum_{i,j}\epsilon_i x_{ij})\prod_{i=1}^n\eta_i^{\epsilon_i}(\prod_{j=1}^{|c_i|}x_{ij})\sum_{\chi\in T}\prod_{i=1}^n\chi^{c_i}(\ttt_i)\chi^{\epsilon_i\aaa_i}(\prod_{j=1}^{|c_i|}x_{ij})=
$$
$$
=(-1)^{||\ccc||}q^{-||\ccc||/2}\sum_{x_{ij}\in k^\times}\psi(\sum_{i,j}\epsilon_ix_{ij})\prod_{i=1}^n\eta_i^{\epsilon_i}(\prod_{j=1}^{|c_i|}x_{ij})\sum_{\chi\in T}\prod_{i=1}^n\prod_{l=1}^r\chi_l^{c_i}(t_{il})\chi_l^{\epsilon_ia_{il}}(\prod_{j=1}^{|c_i|}x_{ij})=
$$
$$
=(-1)^{||\ccc||}q^{-||\ccc||/2}\sum_{x_{ij}\in k^\times}\psi(\sum_{i,j}\epsilon_ix_{ij})\prod_{i=1}^n\eta_i^{\epsilon_i}(\prod_{j=1}^{|c_i|}x_{ij})\prod_{l=1}^r\sum_{\chi_l\in\widehat{k^\times}}\prod_{i=1}^n\chi_l^{c_i}(t_{il})\chi_l^{\epsilon_ia_{il}}(\prod_{j=1}^{|c_i|}x_{ij})=
$$
$$
=(-1)^{||\ccc||}q^{-||\ccc||/2}\sum_{x_{ij}\in k^\times}\psi(\sum_{i,j}\epsilon_ix_{ij})\prod_{i=1}^n\eta_i^{\epsilon_i}(\prod_{j=1}^{|c_i|}x_{ij})\prod_{l=1}^r\sum_{\chi_l\in\widehat{k^\times}}\chi_l(\prod_{i=1}^n\prod_{j=1}^{|c_i|}t_{il}^{\epsilon_i}x_{ij}^{\epsilon_ia_{il}}),
$$
where the character $\chi\in T$ is given by $\chi(\mathbf t)=\prod_{l=1}^r\chi_l(t_l)$ for one-dimensional $\chi_1,\ldots,\chi_r\in\widehat{k^\times}$. The inner sum vanishes unless $\prod_{i=1}^n\prod_{j=1}^{|c_i|}t_{il}^{\epsilon_i}x_{ij}^{\epsilon_ia_{il}}=1$, in which case it is equal to $q-1$, so we get
$$
\Sigma_1=(-1)^{||\ccc||}(q-1)^rq^{-||\ccc||/2}\sum_{\mathbf x \in X}\psi(\sum_{i,j}\epsilon_ix_{ij})\prod_{i=1}^n\eta_i^{\epsilon_i}(\prod_{j=1}^{|c_i|}x_{ij})=
$$
$$
=(-1)^{||\ccc||}(q-1)^rq^{-||\ccc||/2}\sum_{\mathbf x \in X}\prod_{i=1}^n\prod_{j=1}^{|c_i|}\psi^{\epsilon_i}(x_{ij})\eta_i^{\epsilon_i}(x_{ij}),
$$
where $X\subseteq (k^\times)^{||\ccc||}$ is the subset consisting of the $(x_{ij})_{1\leq i\leq n,1\leq j\leq |c_i|}$ such that $\prod_{i=1}^n\prod_{j=1}^{|c_i|}t_{il}^{\epsilon_i}x_{ij}^{\epsilon_ia_{il}}=1$ for every $l=1,\ldots,r$. We can rewrite this sum as
$$
(-1)^{||\ccc||}(q-1)^rq^{-||\ccc||/2}\sum_{\stackrel{\lambda_{ij}\in(k^\times)^r}{\prod_{ij}\lambda_{ij}=\mathbf 1}}\sum_{\ttt_i^{\epsilon_i}x_{ij}^{\epsilon_i\aaa_i}=\lambda_{ij}}\prod_{i=1}^n\prod_{j=1}^{|c_i|}\psi^{\epsilon_i}(x_{ij})\eta_i^{\epsilon_i}(x_{ij})=
$$
$$
=(q-1)^rq^{-||\ccc||/2}\sum_{\stackrel{\lambda_{ij}\in(k^\times)^r}{\prod_{ij}\lambda_{ij}=\mathbf 1}}\prod_{i=1}^n\prod_{j=1}^{|c_i|}\sum_{\stackrel{x\in k^\times}{x^{\epsilon_i\aaa_i}=\lambda_{ij}\ttt_i^{-\epsilon_i}}}-\psi^{\epsilon_i}(x)\eta_i^{\epsilon_i}(x).
$$

Let $\alpha_i:\Gm\to\Gm^r$ be the morphism of tori given by $t\mapsto t^{\aaa_i}:=(t^{a_{i1}},\ldots,t^{a_{ir}})$. Then $\alpha_i$ factors as $\beta_i\circ[\mu_i]$, where $\beta_i:t\mapsto t^{\bbb_i}$ is a closed embedding and $[\mu_i]:\Gm\to\Gm$ is the $\mu_i$-th power map. For every $i$, the function
$$
(\lambda_{ij})_{j=1}^r\mapsto -\sum_{\stackrel{x\in k^\times}{x^{\epsilon_i\aaa_i}=\lambda_{ij}\ttt_i^{-\epsilon_i}}}\psi^{\epsilon_i}(x)\eta_i^{\epsilon_i}(x)=
-\sum_{\stackrel{x\in k^\times}{x^{\aaa_i}=\lambda_{ij}^{\epsilon_i}\ttt_i^{-1}}}\psi^{\epsilon_i}(x)\eta_i^{\epsilon_i}(x)
$$
is the trace function, on $\Gm^r$, of the complex
$$
\delta_{\ttt_i}\ast\alpha_{i\ast} \HH(\psi,\eta_i)=\delta_{\ttt_i}\ast\beta_{i\ast}[\mu_i]_\ast \HH(\psi,\eta_i)
$$
if $\epsilon_i=1$, and of
$$
inv_\ast(\delta_{\ttt_i}\ast \alpha_{i\ast} \HH(\overline\psi,\overline\eta_i))=\delta_{\ttt_i^{-1}}\ast inv_\ast\alpha_{i\ast} D(\HH(\psi,\eta_i))(-1)=
$$
$$
=\delta_{\ttt_i^{-1}}\ast inv_\ast D(\alpha_{i\ast}\HH(\psi,\eta_i))(-1)
=\delta_{\ttt_i^{-1}}\ast inv_\ast D(\beta_{i\ast}[\mu_i]_\ast \HH(\psi,\eta_i))(-1)
$$
if $\epsilon_i=-1$, since $\alpha_{i\ast}$ commutes with duality (being a finite map) and the Verdier dual of $\HH(\psi,\eta_i)$ is $\HH(\overline\psi,\overline\eta_i)(1)$. The object $inv_\ast D(\alpha_{i\ast}\HH(\psi,\eta_i))$ is the Tannakian inverse of $\alpha_{i\ast}\HH(\psi,\eta_i)$ \cite[Corollaire 3.7.6]{gabber1996faisceaux}, so let us denote it by $\alpha_{i\ast}\HH(\psi,\eta_i)^{\ast(-1)}$. Arithmetically, it is pure of weight $-1$, since $\HH(\psi,\eta_i)$ is pure of weight $1$. By the Lefschetz trace formula, we conclude that $\Sigma_1$ is $(q-1)^rq^{-||\ccc||/2}q^{\sum_{c_i<0}|c_i|}=(q-1)^rq^{-|\ccc|/2}$ times the Frobenius trace at $\mathbf t=\mathbf 1$ of the $!$-convolution: 
$$
\KK:=\ast_{i=1}^n(\delta_{\ttt_i^{\epsilon_i}}\ast\alpha_{i\ast} \HH(\psi,\eta_i)^{\ast\epsilon_i})^{\ast |c_i|}=\ast_{i=1}^n(\delta_{\ttt_i^{\epsilon_i}}\ast\beta_{i\ast}[\mu_i]_\ast \HH(\psi,\eta_i)^{\ast\epsilon_i})^{\ast |c_i|}\in\Dbc(\Gm^r,\Ql)
$$
which is a hypergeometric object as seen in the previous section. The class of its pull-back to $\Gmkk^r$ in the group $(\bar k^\times)^r\times\ZZ^{(\mathcal S\times\mathcal C(\Gmkk))}$ of hypergeometric objects in $\Gmkk^r$ (which we denote additively) is
$$
\Psi(\KK)=\sum_{i=1}^n\sum_{j=1}^{|c_i|}\Psi(\delta_{\ttt_i^{\epsilon_i}}\ast\beta_{i\ast}[\mu_i]_\ast \HH(\psi,\eta_i)^{\ast\epsilon_i})=
$$
$$
=\sum_{i=1}^n\epsilon_i|c_i|\Psi(\delta_{\ttt_i}\ast\beta_{i\ast}[\mu_i]_\ast \HH(\psi,\eta_i))=
$$
$$
=\sum_{i=1}^nc_i\left[(\ttt_i,0)+\Psi(\beta_{i\ast}[\mu_i]_\ast \HH(\psi,\eta_i))\right].
$$
Write $\mu_i=p^{\pi_i}\nu_i$, where $\nu_i$ is prime to $p$. Then
$$
[\mu_i]_\ast\HH(\psi,\eta_i)=[\nu_i]_\ast[p^{\pi_i}]_\ast\HH(\psi,\eta_i)\cong[\nu_i]_\ast\HH(\psi,\eta_i^{p^{-\pi_i}})\cong
$$
$$
\cong\delta_{\nu_i^{-\nu_i}}\ast\left(\ast_{\xi^{\nu_i}=\eta_i^{p^{-\pi_i}}}\HH(\psi,\xi)\right)
$$
by \cite[4.3]{katz1988gauss} (for the power of $p$ part) and \cite[8.9.1]{katz1990esa} (for the prime to $p$ part), so
$$
\beta_{i\ast}[\mu_i]_\ast \HH(\psi,\eta_i)\cong\delta_{\nu_i^{-\nu_i\bbb_i}}\ast\left(\ast_{\xi^{\nu_i}=\eta_i^{p^{-\pi_i}}}\beta_{i\ast}\HH(\psi,\xi)\right)
$$
by \cite[8.1.10(2a)]{katz1990esa}, and its class in the group of hypergeometric objects is
$$
\Psi(\beta_{i\ast}[\mu_i]_\ast \HH(\psi,\eta_i))=\left(\nu_i^{-\nu_i\bbb_i},\sum_{\xi^{\nu_i}=\eta_i^{p^{-\pi_i}}}(\bbb_i,\xi)\right)
$$
where, as explained in the previous section, we identify the set $\mathcal S$ of one-dimensional subtori of $\Gmkk^r$ with the set of primitive $\bbb\in\ZZ^r$.

Therefore, we have
$$
\Psi_2(\KK)=\sum_{i=1}^nc_i\sum_{\xi^{\nu_i}=\eta_i^{p^{-\pi_i}}}(\bbb_i,\xi)=\sum_{i=1}^nc_i\sum_{\xi^{\mu_i}=\eta_i}(\bbb_i,\xi)
$$
since $\xi^{\nu_i}=\eta_i^{p^{-\pi_i}}\Leftrightarrow \xi^{\mu_i}=\eta_i$. For every $\bbb\in\mathcal S$ and $\xi\in \Char_k$, the coefficient of $(\bbb,\xi)$ in this sum is
$$
\sum_{\stackrel{i=1}{\xi^{\mu_i}=\eta_i,\bbb=\bbb_i}}^n c_i.
$$
Suppose that $\Psi_2(\KK)=0$. Then
$$
\sum_{\stackrel{i=1}{\xi^{\mu_i}=\eta_i,\bbb=\bbb_i}}^n c_i=0
$$
for every non-zero $\bbb$ and every $\xi\in\Char_k$, so
$$
0=\sum_{\xi\in\Char_k}\left(\sum_{\stackrel{i=1}{\xi^{\mu_i}=\eta_i,\bbb=\bbb_i}}^n c_i\right)\xi=\sum_{\stackrel{i=1}{\bbb=\bbb_i}}^n c_i\sum_{\xi^{\mu_i}=\eta_i}\xi
$$
in the vector space $V_k$ for every non-zero $\bbb\in\ZZ^r$, which contradicts the hypothesis that the elements $\sum_{\xi^{\mu_i}=\eta_i}\xi$ for $i$ such that $\bbb_i=\bbb$ are linearly independent (since at least one $c_i$ is non-zero). Therefore $\Psi_2(\KK)\neq 0$, and then lemma \ref{lemma} implies that $\HH^0(\KK)=0$. We conclude that
$$
|\Sigma_1|=(q-1)^r q^{-|\ccc|/2} |\mathrm{Tr}(Frob_{\mathbf 1}|\KK_{\bar {\mathbf 1}})|\leq
$$
$$
\leq (q-1)^rq^{-|\ccc|/2}\sum_{i=-r}^{-1}|\mathrm{Tr}(Frob_1|\HH^i(\KK)_{\bar {\mathbf 1}})|\leq (q-1)^r A_{\KK} q^{-1/2}
$$
where $A_{\KK}:=\sum_{i}\dim\HH^i(\KK)_{\bar{\mathbf 1}}$, since $\KK$ is mixed of weights $\leq |\ccc|$, being the convolution of $||\ccc||$ pure objects, $\sum_{i:c_i>0}c_i$ of them of weight $1$ and $\sum_{i:c_i<0}-c_i$ of them of weight $-1$, and then $\HH^i(\KK)$ is mixed of weights $\leq |\ccc|+i\leq |\ccc|-1$ for every $i\leq -1$.

We now proceed to estimate the second summand of (\ref{split}). We have
$$
T\backslash S=\{\chi\in T| \eta_i\chi^{\aaa_i}=\mathbf 1 \text{ for some }i=1,\ldots,n\}=\bigcup_{i=1}^n\{\chi\in T| \eta_i\chi^{\aaa_i}=\mathbf 1 \}
$$
and $|G(\eta_i\chi^{\aaa_i})|\leq\sqrt{q}$ for every $\chi\in T$ and $i=1,\ldots,n$, so
$$
|\Sigma_2|\leq |T\backslash S|\leq\sum_{i=1}^n|\{\chi\in T|\eta_i\chi^{\aaa_i}=\mathbf 1\}|
$$
and $|\{\chi|\eta_i\chi^{\aaa_i}=\mathbf 1\}|\leq a_i(q-1)^{r-1}$, where $a_i=\min_j\{|a_{ij}|\text{ for }j\text{ such that }a_{ij}\neq 0\}$ (if $a_i=a_{ij_0}$, for every choice of $\chi_{j}$ for $j\neq j_0$ there are at most $a_i$ choices for $\chi_{j_0}$ such that $\eta_i\chi^{\aaa_i}=\mathbf 1$). Therefore
$$
|\Sigma_2|\leq (\sum_{i=1}^n a_i)(q-1)^{r-1}=a(q-1)^{r-1}.
$$
In particular, we have $|S|=|T|-|T\backslash S|\geq (q-1)^r-a(q-1)^{r-1}$. By (\ref{split}) we conclude that, for $q>1+a$ (which, in particular, implies $S\neq\emptyset$):
$$
|\Sigma(\Lambda_\ccc)|=\frac{|\Sigma_1-\Sigma_2|}{|S|}\leq
\frac{|\Sigma_1|+|\Sigma_2|}{(q-1)^{r-1}(q-1-a)}\leq
$$
$$
\leq\frac{A_\KK (q-1)^r q^{-1/2}+a(q-1)^{r-1}}{(q-1)^{r-1}(q-1-a)},
$$
which concludes the proof of the estimate and therefore of Theorem \ref{main}.
\end{proof}

\section{Independence of Gauss sums}\label{sec3}
In this section, we will apply the equidistribution theorem \ref{main} to show that all (monomial) relations between Gauss sums that hold for ``almost all'' multiplicative characters are a combination of the Hasse-Davenport relation, the conjugation relation $G(\chi)G(\bar\chi)=\chi(-1)q$ and the Galois invariance relation $G(\chi^p)=G(\chi)$.

Let $k=\Fq$ be a finite field of characteristic $p$ as in the previous sections. Let $r$ be a positive integer, and $\GGG$ the free abelian (multiplicative) group with basis the set $\{\eee_{\eta,\aaa}\}$ indexed by the pairs $(\eta,\aaa)$ where $\eta:k^\times\to\CC^\times$ is a multiplicative character and $\aaa\in\ZZ^r$ a non-zero $r$-tuple. Every $r$-tuple $\chi=(\chi_1,\ldots,\chi_r)$ of multiplicative characters of $k$ induces a group homomorphism $ev_\chi:\GGG\to\CC^\times$ that maps $\eee_{\eta,\aaa}$ to the Gauss sum $G(\eta\chi^\aaa)$. More generally, for every $m\geq 1$ and every $r$-tuple $\chi$ of multiplicative characters of $k_m$, we get a homomorphism $ev_{m,\chi}:\GGG\to\CC^\times$ that maps $\eee_{\eta,\aaa}$ to $G_m(\eta\chi^\aaa)$. We define the following elements of $\GGG$:
\begin{enumerate}
 \item Given a character $\eta:k^\times\to\CC^\times$ and a non-zero $\aaa\in\ZZ^r$, let $P(\eta,\aaa):=\eee_{\eta,\aaa}\eee_{\bar\eta,-\aaa}$.
 \item Given a character $\eta:k^\times\to\CC^\times$ and a non-zero $\aaa\in\ZZ^r$, let $Q(\eta,\aaa):=\eee_{\eta^p,p\aaa}^{-1}\eee_{\eta,\aaa}$.
 \item Given a character $\eta:k^\times\to\CC^\times$, a non-zero $\aaa\in\ZZ^r$ and a positive $d|q-1$, let $R(\eta,\aaa,d):=\eee_{\eta^d,d\aaa}^{-1}\prod_{\xi^d=\mathbf 1}\eee_{\eta\xi,\aaa}=\eee_{\eta^d,d\aaa}^{-1}\prod_{\xi^d=\eta^d}\eee_{\xi,\aaa}$.
\end{enumerate}

For every $r$-tuple $\chi$ of multiplicative characters of $k_m$ such that $\eta\chi^\aaa\neq\mathbf 1$, we have
$$
ev_{m,\chi}(P(\eta,\aaa))=G_m(\eta\chi^\aaa)G_m(\bar\eta\bar\chi^\aaa)=\eta\chi^\aaa(-1)q^m=\chi((-1)^\aaa)\eta(-1)^mq^m
$$
(where, in the last product, $\eta$ is seen as a character on $k$) and
$$
ev_{m,\chi}(Q(\eta,\aaa))=G_m((\eta\chi^\aaa)^p)^{-1}G_m(\eta\chi^\aaa)=1,
$$
and for every positive $d|q-1$ and every $r$-tuple $\chi$ such that $\eta^d\chi^{d\aaa}\neq\mathbf 1$, we have
$$
ev_{m,\chi}(R(\eta,\aaa,d))=G_m((\eta\chi^\aaa)^d)^{-1}\prod_{\xi^d=1}G_m(\eta\xi\chi^\aaa)=\chi(d^{-d\aaa})\eta(d^{-d})^m\prod_{\xi^d=\mathbf 1}G_m(\xi)
$$
by the Hasse-Davenport product formula (\ref{h-d}).

Let $\HH\subseteq\GGG$ be the subgroup generated by the $P(\eta,\aaa)$, $Q(\eta,\aaa)$ and $R(\eta,\aaa,d)$ for every $\eta:k^\times\to\CC^\times$, non-zero $\aaa\in\ZZ^r$ and $d|q-1$. If $\mathbf x\in\HH$, from the previous paragraph we deduce that there exists some constants $D$ and $n$ and some $\ttt\in (k^\times)^r$ such that for every $m\geq 1$ and all $\chi$ except at most $n$ of them, $\chi(\ttt)ev_{m,\chi}(\mathbf x)=D^m$: if $x=P(\eta,\aaa)$ we can take $\ttt=(-1)^\aaa$ and $D=\eta(-1)q$, if $x=Q(\eta,\aaa)$ we can take $\ttt=(1,\ldots,1)$ and $D=1$, if $x=R(\eta,\aaa,d)$ we can take $\ttt=d^{d\aaa}$ and $D=\eta(d^{-d})\prod_{\xi^d=\mathbf 1}G(\xi)$, and we conclude by multiplicativity. The main result of this section is a converse of this.
\begin{theorem}
 Let $\mathbf x\in \GGG$ and assume that there exist an element $\ttt\in (k^\times)^r$, a non-zero integer $N$, a subset $U_m\subseteq T_m$ for every $m\geq 1$ and a sequence of complex numbers $\{D_m\}_{m\geq 1}$ such that
 $$
 \lim_{m\to\infty}\frac{|U_m|}{q^m}=1
 $$
 and $(\chi(\ttt)ev_{m,\chi}(\mathbf x))^N=D_m$ for every $m\geq 1$ and every $\chi\in U_m$. Then $\mathbf x\in\HH$.  
\end{theorem}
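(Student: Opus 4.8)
The plan is to identify $\HH$ with the kernel of the ``symbol'' map sending a product of basis elements to the class of the associated hypergeometric object, and to extract that kernel by combining a reduction procedure with the cohomological estimate underlying Proposition \ref{previa}. Writing $\mathbf x=\prod_i\eee_{\eta_i,\aaa_i}^{\epsilon_i}$ with distinct basis indices, my first move is to reduce $\mathbf x$ modulo $\HH$ to a normal form. Using the relations $P(\eta,\aaa)$ I arrange all $\mu_i>0$; using $Q(\eta,\aaa)$ together with the fact that $\chi\mapsto\chi^p$ is an automorphism of $\widehat{k^\times}$ I arrange all $\mu_i$ prime to $p$, so that $\aaa_i=\nu_i\bbb_i$; and using $R(\eta,\aaa,d)$ I subdivide: whenever a prime $\ell\mid\gcd(\nu_i,q-1)$ satisfies $\eta_i\in(\widehat{k^\times})^\ell$, I replace $\eee_{\eta_i,\nu_i\bbb_i}$ by $\prod_{\xi^\ell=\mathbf 1}\eee_{\eta_0\xi,(\nu_i/\ell)\bbb_i}$ with $\eta_0^\ell=\eta_i$. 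Each such step strictly decreases a $\nu_i$, so the procedure terminates at a representative $\mathbf x''=\prod_k\eee_{\eta_k,\nu_k\bbb_k}^{\epsilon_k}$ with distinct indices in which every pair $(\eta_k,\nu_k)$ is \emph{irreducible}: for each prime $\ell\mid\gcd(\nu_k,q-1)$ one has $\eta_k\notin(\widehat{k^\times})^\ell$.

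Next I transfer the hypothesis from $\mathbf x$ to $\mathbf x''$. Since $\mathbf x=\mathbf x''\mathbf h$ with $\mathbf h\in\HH$, the computation preceding the theorem gives $\chi(\mathbf s)ev_{m,\chi}(\mathbf h)=E^m$ for all but boundedly many $\chi$, whence $(\chi(\ttt\mathbf s^{-1})ev_{m,\chi}(\mathbf x''))^N=D_mE^{-mN}$ on a set $U_m'$ of density $1$. Writing $\ttt'=\ttt\mathbf s^{-1}$, $D_m'=D_mE^{-mN}$ and $\ccc''=(\epsilon_k)_k$, it remains to show $\mathbf x''=\mathbf 1$. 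Suppose not, and consider the hypergeometric object $\KK=\delta_{\ttt'^{-N}}\ast\big(\ast_k(\alpha_{k\ast}\HH(\psi,\eta_k)^{\ast\,\mathrm{sgn}(\epsilon_k)})^{\ast N|\epsilon_k|}\big)$ on $\Gm^r$, where $\alpha_k(t)=t^{\nu_k\bbb_k}$. Exactly as in section \ref{proof}, its Frobenius trace at $\mathbf 1$ over $k_m$ is, up to an explicit constant and a factor $(q^m-1)^r$, equal to $\sum_{\chi\in T_m}(\chi(\ttt')ev_{m,\chi}(\mathbf x''))^N$; the punctual translation is precisely what produces the factor $\chi(\ttt')^N$ and lets an \emph{arbitrary} $\ttt'$ be incorporated without having to solve any divisibility condition. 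Moreover $\Psi_2(\KK)=N\sum_k\epsilon_k\sum_{\xi^{\nu_k}=\eta_k}(\bbb_k,\xi)$, the punctual factor contributing only to $\Psi_1$.

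The contradiction then runs as in Theorem \ref{main}. On the one hand, the hypothesis forces $\big|\sum_{\chi\in T_m}(\chi(\ttt')ev_{m,\chi}(\mathbf x''))^N\big|\ge (q^m-1)^rq^{mN|\ccc''|/2}(1-o(1))$, since the summand equals $D_m'$, of modulus $q^{mN|\ccc''|/2}$, on the density-$1$ set $U_m'\cap S_m$ and is $O(q^{mN|\ccc''|/2})$ elsewhere. On the other hand, once we know $\Psi_2(\KK)\ne0$, Lemma \ref{lemma} gives $\HHH^0(\KK)=0$, so $\KK_{\bar{\mathbf 1}}$ is concentrated in degrees $\le-1$ and, $\KK$ being mixed of weight $\le N|\ccc''|$, its trace is $O(q^{m(N|\ccc''|-1)/2})$; multiplying by $(q^m-1)^r$ this is $O\big((q^m-1)^rq^{mN|\ccc''|/2}q^{-m/2}\big)$, incompatible with the lower bound for large $m$. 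Hence $\mathbf x''=\mathbf 1$ and $\mathbf x\in\HH$.

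Everything therefore rests on the one input still owed, namely $\Psi_2(\KK)\ne0$ when $\mathbf x''\ne\mathbf 1$; equivalently, that for each fixed $\bbb$ the coset sums $\vvv_{\eta_k,\nu_k}=\sum_{\xi^{\nu_k}=\eta_k}\xi$ attached to \emph{distinct irreducible} pairs are linearly independent in $V_k$. This is the crux, and the step I expect to be hardest. I would prove it by splitting $\bar A:=\Char_k\cong(\QQ/\ZZ)^{(p')}$ into its $\ell$-primary pieces $\bar A_\ell$; since $\nu_k$ is prime to $p$, each $\vvv_{\eta_k,\nu_k}$ is a pure tensor $\bigotimes_\ell\vvv_{\eta_{k,\ell},\ell^{a_{k,\ell}}}$ over the finitely many relevant primes. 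Irreducibility says exactly that at each $\ell\mid\gcd(\nu_k,q-1)$ the $\ell$-part of $\eta_k$ has maximal order $\ell^{v_\ell(q-1)}$, and a short computation shows that the corresponding local coset sum is then supported on elements all of order $\ell^{a+v_\ell(q-1)}$. Consequently the local coset sums for distinct local data are linearly independent (supports disjoint when $\ell\mid q-1$, strictly nested when $\ell\nmid q-1$); extending them to bases factorwise and using that distinct pairs $(\eta_k,\nu_k)$ give distinct tuples of local data, the pure tensors become distinct tensor-basis vectors and hence are globally independent. The main obstacle is precisely this linear-independence statement: it amounts to verifying that the subdivision relations encoded by $R$ exhaust all relations among coset sums over $k$, and the case $\gcd(\nu,q-1)=1$, where the roots are already defined over $k$, requires the support bookkeeping to be carried out with care.
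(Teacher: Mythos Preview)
Your overall strategy --- reduce $\mathbf x$ modulo $\HH$ to a normal form of ``irreducible'' pairs, then prove once and for all that the coset sums $\vvv_{\eta_k,\nu_k}$ attached to such pairs are linearly independent --- is a genuinely different organization from the paper's. The paper never isolates a normal form; it argues by induction on $\sum_i(\mu_i-1)$, at each step invoking Theorem~\ref{main} directly to force a linear dependence among the \emph{current} $\vvv_i$'s (else equidistribution already contradicts the constancy hypothesis), and then using that specific dependence to locate a term to which a $Q$- or $R$-move applies, or else to derive a short combinatorial contradiction. Your version front-loads all the reduction and trades the paper's ad hoc endgame for a clean standalone independence lemma; both routes use the same ingredients.

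There is, however, a real gap in your contradiction step. The Frobenius trace at $\mathbf 1$ of your $\KK$ is \emph{not} $\sum_{\chi\in T_m}(\chi(\ttt')ev_{m,\chi}(\mathbf x''))^N$: for $\epsilon_k<0$ the Tannakian inverse contributes $\overline{G_m(\eta_k\chi^{\aaa_k})}$, not $G_m(\eta_k\chi^{\aaa_k})^{-1}$, so what the trace actually computes is the paper's $\Sigma_1$ with complex conjugates on the negative-exponent factors. The two expressions agree only on $S_m$. Correspondingly, your claim that the summand is ``$O(q^{mN|\ccc''|/2})$ elsewhere'' fails on $T_m\setminus S_m$: if $\eta_k\chi^{\aaa_k}=\mathbf 1$ with $\epsilon_k<0$ then $G_m(\mathbf 1)^{N\epsilon_k}=1$ exceeds $q^{mN\epsilon_k/2}$, and the resulting error term can dominate the main term as soon as $N\sum_{\epsilon_k<0}\epsilon_k\le -2$. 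The fix is either to restrict the sum to $S_m$ and bound the complement separately (exactly the $\Sigma_1-\Sigma_2$ split of Proposition~\ref{previa}), or --- more simply --- to quote Theorem~\ref{main} itself for $\mathbf x''$, as the paper does: once your independence lemma places $\mathbf x''$ under its hypothesis, equidistribution of $\Lambda_{N\ccc''}(\Phi_m(\chi))$ on $S^1$ is immediate and incompatible with constancy on a density-one subset. A minor side remark: your tensor formula $\vvv_{\eta,\nu}=\bigotimes_\ell\vvv_{\eta_\ell,\ell^{a_\ell}}$ needs $\eta_\ell$ replaced by $\eta_\ell$ raised to the inverse of $\nu/\ell^{a_\ell}$ in the $\ell$-primary part; since this has the same order as $\eta_\ell$, your support analysis and the independence conclusion survive unchanged.
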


The theorem says that any relation satisfied by Gauss sums associated to monomials for ``almost all'' $r$-tuples of multiplicative characters must be a combination of the identities (\ref{elem1}), (\ref{elem2}) and (\ref{h-d}).

\begin{proof}
 Let $\mathbf x=\prod_{i=1}^n \eee_{\eta_i,\aaa_i}^{\epsilon_i}$ with $(\eta_i,\aaa_i)$ distinct and $\epsilon_i\in\ZZ\backslash\{0\}$, and let $S_m\subseteq T_m$, as in the introduction, be the set of $\chi$ such that $\eta_i\chi^{\aaa_i}:=\eta_i\chi_1^{a_{i1}}\cdots\chi_r^{a_{ir}}\neq\mathbf 1$ for every $i=1,\ldots,n$. Then $D_m$ must have absolute value $q^{m\epsilon N/2}$ for sufficiently large $m$ (large enough so that $S_m\cap U_m$ is non-empty), where $\epsilon=\sum_i\epsilon_i$, since the Gauss sums associated to non-trivial characters of $k_m$ have absolute value $q^{m/2}$. Write $\aaa_i=\mu_i\bbb_i$ for all $i$, where $\mu\in\ZZ\backslash\{0\}$ and $\bbb_i\in\ZZ^r$ is primitive.
 
 We claim that there exists some $\bbb\in\ZZ^r\backslash\{\mathbf 0\}$ such that the elements $\sum_{\xi^{\mu_i}=\eta_i}\xi\in V_k$ for the $i=1,\ldots,n$ such that $\bbb_i=\bbb$ are linearly dependent. Otherwise, let $\uuu\in (k_{m_0}^\times)^r$ be an element with coordinates in some finite extension $k_{m_0}$ of $k$ such that $\uuu^{\epsilon_1}=\ttt$. By theorem \ref{main} the elements $(q^{-m/2}\chi(\uuu)G_m(\eta_1\chi^{\aaa_1}),q^{-m/2}G_m(\eta_2\chi^{\aaa_2}),\ldots,q^{-m/2}G_m(\eta_n\chi^{\aaa_n}))$ for $\chi\in S_m$ would become equidistributed on $(S^1)^n$ as $m\to\infty$ ($m$ being a multiple of $m_0$). Since the homomorphism $(S^1)^n\to S^1$ given by $(t_1,\ldots,t_n)\mapsto \prod_{i=1}^n t_i^{\epsilon_i}$ maps the Haar measure of $(S^1)^n$ to the Haar measure of $S^1$, we conclude that the elements
 $$
 (q^{-m/2}\chi(\uuu)G_m(\eta_1\chi^{\aaa_1}))^{\epsilon_1}\prod_{i=2}^n (q^{-m/2}G_m(\eta_i\chi^{\aaa_i}))^{\epsilon_i}=
 $$
 $$
 =q^{-m\epsilon/2}\chi(\ttt)\prod_{i=1}^n (G_m(\eta_i\chi^{\aaa_i}))^{\epsilon_i}=
 q^{-m\epsilon/2}\chi(\ttt)  ev_{m,\chi}(\mathbf x)
 $$
 become equidistributed on $S^1$ as $m\to\infty$, and then so do their $N$-th powers (since $t\mapsto t^N$ is a surjective homomorphism). This clearly contradicts the hypothesis that (almost all of) these $N$-th powers coincide.

Multiplying by suitable powers of elements of $\HH$ of the form $\eee_{\eta,\aaa}\eee_{\bar\eta,-\aaa}$, we may assume that $\mu_i>0$ for all $i$. We now proceed by induction on $\mu:=\sum_i(\mu_i-1)=\sum_i\mu_i-n$.

If $\mu=0$, then $\mu_i=1$ for all $i$. By the claim, there must be some $\bbb$ such that the elements $\eta_i\in V_k$ for $i$ such that $\bbb_i=\bbb$ are linearly dependent. But that can only happen if two of them coincide, which contradicts the distinctness of the $(\eta_i,\aaa_i)$.

Let $\mu>0$. If some $\mu_i$ is a multiple of $p$, we can multiply $\mathbf x$ by an element of $\HH$ of the form $\eee_{\eta^p,p\aaa}^{-1}\eee_{\eta,\aaa}$, which decreases $\mu$ by $p-1$, and proceed by induction. So we may assume that all $\mu_i$ are prime to $p$. By the claim, there is some $\bbb$ such that the elements $\sum_{\xi^{\mu_i}=\eta_i}\xi\in V_k$ for the $i=1,\ldots,n$ such that $\bbb_i=\bbb$ are linearly dependent. Pick some non-trivial dependency relation, and let $i$ be such that $\bbb_i=\bbb$, $\sum_{\xi^{\mu_i}=\eta_i}\xi$ appears with non-zero coefficient in it, and $\mu_i$ is the largest among the $i$'s with these properties. 

Suppose that, for some $d|\mu_i$, there were two different $d$-th roots of $\eta_i$ defined over $k$. Then their ratio is a non-trivial character of order $e$ for some $e|d$ which is defined over $k$. We deduce that all $e$-th roots of $\eta_i$ are defined over $k$ (one is obtained by raising a $d$-th root to the $d/e$-th power, and then all others by multiplying by powers of the character of order $e$). We can then multiply $\mathbf x$ by the element $\eee_{\eta_i,\mu_i\bbb_i}^{-1}\prod_{\xi^e=\eta_i}\eee_{\xi,(\mu_i/e)\bbb_i}\in\HH$ or its inverse, which decreases $\sum_i(\mu_i-1)$ by $e-1$, and proceed by induction.

So we may assume that, for every $d|\mu_i$, there is at most one $d$-th root of $\eta_i$ defined over $k$. If there is a $d$-th root and a $d'$-th root, then there is a $\mathrm{lcm}(d,d')$-th root by B\'ezout, so there is some maximal $d|\mu_i$ such that $\eta_i$ has a (unique) $d$-th root $\theta$ defined over $k$ and, for every $e|\mu_i$, $\eta_i$ has an $e$-th root defined over $k$ if and only if $e|d$, in which case the $e$-th root in question is $\theta^{d/e}$.

Pick a character $\xi_0\in\Char_k$ such that $\xi_0^{\mu_i/d}=\theta$ (in particular, $\xi_0^{\mu_i}=\eta_i$) and a character $\epsilon\in\Char_k$ of order $\mu_i$. Then $(\xi_0\epsilon)^{\mu_i}=\eta_i$, so $\xi_0\epsilon$ appears with non-zero coefficient in $\sum_{\xi^{\mu_i}=\eta_i}\xi$. By the linear dependence relation, it must appear in $\sum_{\xi^{\mu_j}=\eta_j}\xi$ for some other $j\neq i$ with $\bbb_j=\bbb$. By the distinctness of the $(\eta_i,\aaa_i)$ we can not have $\mu_i=\mu_j$, so by the maximality of $\mu_i$ we must have $\mu_j<\mu_i$. Since $(\xi_0\epsilon)^{\mu_i}=\eta_i$ and $(\xi_0\epsilon)^{\mu_j}=\eta_j$ are defined over $k$, so is $(\xi_0\epsilon)^{\mu_0}$ where $\mu_0=\mathrm{gcd}(\mu_i,\mu_j)<\mu_i$. Then $(\xi_0\epsilon)^{\mu_0}$ is a $(\mu_i/\mu_0)$-th root of $\eta_i$ defined over $k$, so $(\mu_i/\mu_0)|d$ and $(\xi_0\epsilon)^{\mu_0}=\theta^{d/(\mu_i/\mu_0)}=(\xi_0^{\mu_i/d})^{d/(\mu_i/\mu_0)}=\xi_0^{\mu_0}$. Therefore $\epsilon^{\mu_0}$ is trivial, which contradicts the fact that $\epsilon$ has order $\mu_i$.
\end{proof}

\section{Independence of $p$}
In this final section we will prove a version of Theorem \ref{main} where we allow the fields over which the characters $\chi$ are defined to have different characteristics.

Let $\KK\in\Dbc(\PP^r_k,\Ql)$. The \emph{complexity} $c(\KK)\in\mathbb N$ of $\KK$ is defined in \cite[Definition 3.2]{quantitative} as the maximum, for $0\leq s\leq r$, of the sum of the (geometric) Betti numbers of the restriction of $\KK$ to a generic linear subspace of $\PP^r_k$ of dimension $s$. More generally, for a quasi-projective variety $X$ with an embedding $u:X\to\PP^r_k$, the \emph{complexity} of an object $\KK\in\Dbc(X,\Ql)$ is $c_u(\KK):=c(u_!\KK)$ \cite[Definition 6.3]{quantitative}. We will consider $X=\Gmk^r$ embedded in $\PP^r_k$ in the natural way. The following lemma optimizes the upper bound on the complexity that follows from the general formalism in \cite{quantitative}.

\begin{lemma}
 Let $\eta:k^\times\to\CC^\times$ be a character, $\aaa\in\ZZ^r$ a non-zero $r$-tuple and $\ttt\in (k^\times)^r$. Let $\alpha:\Gmk\to\Gmk^r$ be the homomorphism of tori given by $t\mapsto t^\aaa$. Then the complexity of $\KK_{\ttt,\aaa,\eta}:=\delta_\ttt\ast\alpha_\ast\HH(\psi,\eta)\in\Dbc(\Gmk^r,\Ql)$ is bounded by $2\max_i|a_i|$.
\end{lemma}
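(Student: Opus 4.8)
The plan is to compute $c_u(\KK_{\ttt,\aaa,\eta})=c(u_!\KK_{\ttt,\aaa,\eta})$ directly, reducing it to counting the intersection of a monomial curve with generic linear subspaces of $\PP^r_k$, each intersection point weighted by the generic rank of a pushforward sheaf.

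\emph{Reductions.} First I would dispose of the translation: multiplicative translation by $\ttt$ on $\Gmk^r$ extends to the diagonal linear automorphism $g:[x_0:\cdots:x_r]\mapsto[x_0:t_1x_1:\cdots:t_rx_r]$ of $\PP^r_k$, and $u\circ\mathrm{trans}_\ttt=g\circ u$, so $u_!(\delta_\ttt\ast-)=g_\ast u_!(-)$; since $g$ permutes the linear subspaces of each dimension, $c$ is invariant under $g_\ast$ and I may assume $\ttt=\mathbf 1$. Next, writing $\aaa=\mu\bbb$ with $\bbb$ primitive and $\mu=p^\pi\nu$ ($\nu$ prime to $p$), I factor $\alpha=\beta\circ[\mu]$ with $\beta:s\mapsto s^\bbb$ a closed immersion, so that $u_!\KK_{\mathbf 1,\aaa,\eta}=j_!\GGG$, where $j=u\circ\beta:\Gmk\to\PP^r_k$ sends $s\mapsto[1:s^{b_1}:\cdots:s^{b_r}]$ and $\GGG:=[\mu]_\ast\HH(\psi,\eta)$. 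Because $[\mu]$ is finite with $[p^\pi]_\ast\HH(\psi,\eta)\cong\HH(\psi,\eta^{p^{-\pi}})$ (as in the proof of Theorem \ref{main}) and $[\nu]$ is étale of degree $\nu$, the object $\GGG$ is a shifted lisse sheaf on $\Gmk$ of generic rank $\nu$; and since $\bbb$ is primitive, $j$ is injective with image a curve $C$, supported on which $u_!\KK$ lives, whose projective closure $\bar C$ has degree $D:=\max(0,\max_l b_l)-\min(0,\min_l b_l)$ (the width of the exponent set $\{0,b_1,\dots,b_r\}$, obtained by counting zeros in $\Gmk$ of the Laurent polynomial cutting out a generic hyperplane).

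\emph{Which sections contribute.} Since $\bar C$ is one-dimensional, a generic linear subspace $L$ of dimension $s$ meets it in dimension $1+s-r$; thus for $s\le r-2$ generic $L$ misses $\bar C$ and the restriction vanishes, and only $s=r-1$ and $s=r$ can contribute to the complexity. For $s=r$ one has $\sum_i\dim\HHH^i(\PP^r_{\bar k},u_!\KK)=\sum_i\dim\HHH^i_c(\Gmkk,\GGG)=\sum_i\dim\HHH^{i+1}_c(\Gmkk,\LL_\psi\otimes\LL_\eta)$, using that $[\mu]$ is finite; by Grothendieck–Ogg–Shafarevich ($\mathrm{Sw}_0=0$, $\mathrm{Sw}_\infty=1$) this equals $\dim\HHH^1_c=1$. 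For $s=r-1$, a generic hyperplane meets $\bar C$ transversally in exactly $D$ points, all of which lie in $C$ and in the lisse locus of $\GGG$ (genericity lets $L$ avoid the finitely many boundary points of $\bar C$ and the finitely many non-lisse points); hence $(u_!\KK)|_L$ is a shifted skyscraper whose total cohomology has dimension $\nu D$.

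\emph{Conclusion and main obstacle.} Combining, $c_u(\KK_{\ttt,\aaa,\eta})=\max(1,\nu D)=\nu D$, and since $D\le 2\max_l|b_l|$, $\nu\le\mu$ and $\max_l|a_l|=\mu\max_l|b_l|$, this gives $\nu D\le 2\nu\max_l|b_l|\le 2\mu\max_l|b_l|=2\max_l|a_l|$, as claimed. I expect the main obstacle to be the genericity bookkeeping in the $s=r-1$ case — verifying that a generic hyperplane meets $\bar C$ in precisely $D$ distinct points all contained in the lisse locus of $\GGG$, so that the restriction is genuinely a direct sum of $D$ stalks each of dimension $\nu$ — together with the correct identification of the generic rank as the prime-to-$p$ part $\nu$ of $\mu$ rather than $\mu$ itself.
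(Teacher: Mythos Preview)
Your argument is correct and follows essentially the same route as the paper's own proof: reduce to $\ttt=\mathbf 1$ by observing that translation is linear on $\PP^r$, note that the support is a curve so only the $s=r$ and $s=r-1$ slices matter, compute the global Betti sum as $1$ via $\HHH^\bullet_c(\Gmkk,\LL_\psi\otimes\LL_\eta)$, and for the hyperplane slice bound the number of points by the Laurent degree of $\gamma_0=\sum_l\gamma_l t^{b_l}$ and the stalk dimension by the rank of $[\mu]_\ast\HH(\psi,\eta)$. Your version is in fact slightly sharper than the paper's (you identify the generic rank as the prime-to-$p$ part $\nu$ and the slice contribution as $\nu D$ exactly, whereas the paper only records the bounds $\leq|\mu|$ and $\leq 2|\mu|\max_l|b_l|$), but the structure and the key inputs are identical.
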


\begin{proof}
Since taking convolution with $\delta_\ttt$ is just appying a translation, which preserves the set of generic linear subspaces, we may assume $\ttt=\mathbf 1$.

 In that case, $\KK_{\ttt,\aaa,\eta}$ is supported on a one-dimensional subtorus, so its restriction to a generic linear subspace of $\PP_k^r$ of dimension $s$ is empty except for $s=r,r-1$. For $s=r$, since $\alpha$ is a finite map, we have $\HHH^i(\PP_{\bar k}^r,u_!\KK_{\ttt,\aaa,\eta})=\HHH^i_c(\Gmkk^r,\KK_{\ttt,\aaa,\eta})=\HHH^i_c(\Gmkk,\HH(\psi,\chi))$ which is one-dimensional for $i=0$, and vanishes for all other $i$.
 
 Write $\aaa=\mu\bbb$, where $\bbb\in\ZZ^r$ is primitive. We have $\alpha=\beta\circ [\mu]$, where $\beta:t\mapsto t^\bbb$ is an immersion and $[\mu]$ is the $\mu$-th power map. Then $[\mu]_\ast\HH(\psi,\chi)$ has rank $\leq|\mu|$ (the exact rank is the prime to $p$ part of $|\mu|$). A generic hyperplane of $\PP^r_k$ intersects the image of $\alpha$ (or, equivalently, of $\beta$) in $B$ points, where $B$ is the number of solutions in $\bar k^\times$ of
 $$
 \gamma_1 t^{b_1}+\cdots+\gamma_r t^{b_r}=\gamma_0
 $$
 for generic $\gamma_0,\gamma_1,\ldots,\gamma_r$, which is $\leq \max_ib_i-\min\{0,\min_ib_i\}\leq 2\max_i|b_i|$. At each of these points, the restriction of $\KK_{\ttt,\aaa,\eta}$ is a single object in degree $-1$, of dimension $\leq|\mu|$. So the sum of the Betti numbers of this restriction is bounded by $2|\mu|\max_i|b_i|=2\max_i|a_i|$.
\end{proof}

By \cite[Example 7.19]{quantitative}, there is an absolute constant $N=N(r)$ such that the convolution of $d$ objects on $\Gmk^r$ as in the lemma with $\max_i|a_i|\leq A$ has complexity $\leq N^{d-1}A^d$. By \cite[Theorem 6.19(3)]{quantitative} applied to the projections and the multiplication map $\GG^r_{m,\ZZ[\ell^{-1}]}\times\GG^r_{m,\ZZ[\ell^{-1}]}\to\GG^r_{m,\ZZ[\ell^{-1}]}$ this constant $N$ can be taken to be the same for every finite field $k$ (by varying the $\ell$ if necessary). In particular, if $\KK$ is such a convolution, we have by \cite[Theorem 6.8]{quantitative} and \cite[Theorem 6.19]{quantitative} applied now to the closed immersion $\iota_{\mathbf 1}:\PP^0_{\ZZ[\ell^{-1}]}\hookrightarrow\GG^r_{m,\ZZ[\ell^{-1}]}$ mapping the only point of $\PP^0$ to $\mathbf 1$:
$$
A_\KK:=\sum_i \dim \HH^i(\KK)_{\mathbf 1}=\sum_i \HHH^i_c(\{\mathbf 1\},\iota_{\mathbf 1}^\ast\KK) = c(\iota_{\mathbf 1}^\ast\KK)\leq C  N^{d-1} A^d
$$
for some absolute constant $C=C(r)$. Applying this to the proof of Theorem \ref{main} given in section \ref{proof}, we get
\begin{theorem}
Fix some $A>0$. For every $m\geq 1$, let $q_m$ be a prime power such that $\lim_m q_m\to\infty$. Let $k_m$ be the finite field with $q_m$ elements, and pick $n$ multiplicative characters $\eta_{m,1},\ldots,\eta_{m,n}:k_m^\times\to\CC^\times$, $n$ $r$-tuples $\aaa_{m,1},\ldots,\aaa_{m,n}\in\ZZ^r$ such that $\max_j|a_{mij}|\leq A$ for every $i$,
and $n$ points $\ttt_1,\ldots,\ttt_n\in (k_m^\times)^r$. Let $S_m$ be the set of $r$-tuples of multiplicative characters $\chi_1,\ldots,\chi_r:k_m^\times\to\CC^\times$ such that $\eta_{m,i}\chi^{\aaa_{m,i}}\neq \mathbf 1$ for all $i$, and assume that the linear independency hypothesis of Theorem \ref{main} holds for all $m\geq 1$. Then the elements
$$
\Phi_m(\chi)=(q_m^{-1/2}\chi(\ttt_{m,1})G(\eta_{m,1}\chi^{\aaa_{m,1}}),\ldots,q_m^{-1/2}\chi(\ttt_{m,n})G(\eta_{m,n}\chi^{\aaa_{m,n}}))
$$
for $\chi\in S_m$ become equidistributed on $(S^1)^n$ as $m\to\infty$.
\end{theorem}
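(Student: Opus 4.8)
The plan is to run the proof of Theorem \ref{main}, in the quantitative form of Proposition \ref{previa}, separately over each field $k_m$, and to replace the object-dependent constant $A(\ccc)$ appearing there by the characteristic-uniform complexity bound established just above. As in section \ref{proof}, I would fix a nonzero $\ccc=(c_1,\ldots,c_n)\in\ZZ^n$; by the Peter--Weyl/Weyl criterion it suffices to prove that
$$
\lim_{m\to\infty}\Sigma_m(\Lambda_\ccc)=0,\qquad \Sigma_m(\Lambda_\ccc):=|S_m|^{-1}\sum_{\chi\in S_m}\Lambda_\ccc(\Phi_m(\chi)).
$$
For each $m$ I would carry out the $m=1$ computation of Proposition \ref{previa} verbatim, but over the field $k_m$ of size $q_m$, with the data $(\eta_{m,i},\aaa_{m,i},\ttt_{m,i})$. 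This splits $|S_m|\,\Sigma_m(\Lambda_\ccc)=\Sigma_1^{(m)}-\Sigma_2^{(m)}$, where $\Sigma_1^{(m)}$ equals $(q_m-1)^r q_m^{-|\ccc|/2}$ times the Frobenius trace at $\mathbf 1$ of a hypergeometric convolution $\KK_m\in\Dbc(\Gm^r,\Ql)$, built exactly as in section \ref{proof} from the $n$ objects $\delta_{\ttt_{m,i}}\ast\alpha_{m,i\ast}\HH(\psi,\eta_{m,i})$, where $\alpha_{m,i}:t\mapsto t^{\aaa_{m,i}}$.

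Since the linear independence hypothesis is assumed to hold for every $m$, the computation of $\Psi_2(\KK_m)$ in section \ref{proof} shows $\Psi_2(\KK_m)\neq 0$, so Lemma \ref{lemma} gives $\HH^0(\KK_m)=0$ for every $m$; hence only the cohomology sheaves in degrees $-r\leq i\leq -1$ contribute, each of weight $\leq|\ccc|-1$, and the weight estimate of Proposition \ref{previa} yields
$$
|\Sigma_1^{(m)}|\leq (q_m-1)^r A_{\KK_m}\,q_m^{-1/2},\qquad A_{\KK_m}:=\sum_i\dim\HH^i(\KK_m)_{\mathbf 1}.
$$
The decisive point is that $A_{\KK_m}$ must be bounded \emph{uniformly in $m$}: unlike in section \ref{proof}, where $\KK$ was a single fixed object with a fixed cohomological size, here the $\KK_m$ vary with $m$ and in particular have varying characteristic, so no fixed constant is available a priori. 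This is precisely what the complexity estimates above provide: each $\KK_m$ is a convolution of $d:=||\ccc||$ objects of the type treated in the lemma with $\max_i|a_i|\leq A$, so $A_{\KK_m}\leq C\,N^{d-1}A^d$ with $C=C(r)$ and $N=N(r)$ absolute constants independent of $m$ and of the characteristic.

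For the complementary term, the quantity $a_m=\sum_i\min_{j:a_{mij}\neq 0}|a_{mij}|$ satisfies $a_m\leq nA$ (since every entry is bounded by $A$), giving $|\Sigma_2^{(m)}|\leq nA(q_m-1)^{r-1}$ and $|S_m|\geq (q_m-1)^r-nA(q_m-1)^{r-1}$. Combining these bounds exactly as in Proposition \ref{previa}, for all $m$ large enough that $q_m>1+nA$,
$$
|\Sigma_m(\Lambda_\ccc)|\leq \frac{C\,N^{d-1}A^d (q_m-1)^r q_m^{-1/2}+nA(q_m-1)^{r-1}}{(q_m-1)^{r-1}(q_m-1-nA)},
$$
whose right-hand side tends to $0$ as $q_m\to\infty$ (the first summand through the factor $q_m^{-1/2}$, the second through the extra power of $q_m$ in the denominator), giving $\lim_{m\to\infty}\Sigma_m(\Lambda_\ccc)=0$. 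The main obstacle is exactly the uniformity of $A_{\KK_m}$ across different characteristics; once the characteristic-independent complexity bound from \cite{quantitative} is in place --- which is the purpose of the lemma and the discussion preceding the statement --- the remainder is a mechanical rerun of the fixed-field argument.
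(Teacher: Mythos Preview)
Your proposal is correct and follows essentially the same approach as the paper's own proof: rerun the estimate of Proposition \ref{previa} over each $k_m$, invoke the linear independence hypothesis to get $\Psi_2(\KK_m)\neq 0$ and hence $\HH^0(\KK_m)=0$, and then replace the object-dependent constant $A_{\KK_m}$ by the uniform bound $C\,N^{||\ccc||-1}A^{||\ccc||}$ coming from the complexity formalism, together with $a_m\leq nA$. The paper's argument is identical in structure and in the final displayed inequality.
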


\begin{proof}
 As in the proof of Theorem \ref{main}, we need to show that for every non-zero $\ccc\in\ZZ^r$ we have
 $$
 \lim_{m\to\infty}|S_m|^{-1}\sum_{\chi\in S_m}\Lambda_\ccc(\Phi_m(\chi))=0.
 $$
 And, as in proposition \ref{previa}, we get for $q_m>1+a_m$, where $a_m:=\sum_i\min_{j:a_{mij}\neq 0}|a_{mij}|\leq nA$:
 $$
 |S_m|^{-1}\left|\sum_{\chi\in S_m}\Lambda_\ccc(\Phi_m(\chi))\right|\leq\frac{A_{\KK}(q_m-1)^rq_m^{-1/2}+a_m(q_m-1)^{r-1}}{(q_m-1)^{r-1}(q_m-1-a_m)}
 $$
 so
 $$
 |S_m|^{-1}\left|\sum_{\chi\in S_m}\Lambda_\ccc(\Phi_m(\chi))\right|\leq
\frac{CN^{||\ccc||-1}A^{||\ccc||}(q_m-1)^rq_m^{-1/2}+nA(q_m-1)^{r-1}}{(q_m-1)^{r-1}(q_m-1-nA)}\stackrel{m\to\infty}{\longrightarrow}0.
 $$
 \end{proof}
 
 Note that the condition $\max_j|a_{mij}|\leq A$ is necessary: if we take $n=r=1$, $\ttt=\mathbf 1$, $\eta_{m,1}$ any non-trivial character of $k_m^\times$ and $\aaa_{m,1}=(q_m-1)$ for every $m$, then $\Phi_m(\chi)=(q_m^{-1/2}G(\eta_{m,1}))$ is independent of $\chi$ for every $m\geq 1$.

\bibliographystyle{amsalpha}
\bibliography{bibliography}

\end{document}